\theoremstyle{thmstyleone}%
\newtheorem{theorem}{Theorem}%  meant for continuous numbers
\newtheorem{coro}[theorem]{Corollary}% 
\theoremstyle{thmstyletwo}%
\theoremstyle{thmstylethree}%
\newtheorem{definition}{Definition}%
\theoremstyle{thmstylethree}%
\newtheorem{conjecture}{Conjecture}%
\theoremstyle{thmstylethree}%
\newtheorem{lemma}{Lemma}%
\newcommand{\C}{{\mathbb{C}}}
\newcommand{\Z}{{\mathbb{Z}}}
\newcommand{\Q}{{\mathbb{Q}}}
\newcommand{\N}{{\mathbb{N}}}
\newcommand{\Gal}{{\mathrm{Gal}}}
\newcommand{\p}{{\mathfrak{p}}}
\newcommand{\Nm}{{\mathrm{Nm}}}
\renewcommand{\O}{{\mathcal{O}}}
\begin{document}

\title[The $abc$ conjecture implies infinitely many non-Wieferich places for fixed bases in number fields]{The $abc$ conjecture implies infinitely many non-Wieferich places for fixed bases in number fields}

%%=============================================================%%
%% GivenName	-> \fnm{Joergen W.}
%% Particle	-> \spfx{van der} -> surname prefix
%% FamilyName	-> \sur{Ploeg}
%% Suffix	-> \sfx{IV}
%% \author*[1,2]{\fnm{Joergen W.} \spfx{van der} \sur{Ploeg} 
%%  \sfx{IV}}\email{iauthor@gmail.com}
%%=============================================================%%

\author*[1]{\fnm{Hester} \sur{Graves}}\email{hkgrave@super.org}

\author[2]{\fnm{Benjamin} \sur{Weiss}}\email{Benjamin.Leonard.Weiss@gmail.com}
%\equalcont{These authors contributed equally to this work.}

%\author[1,2]{\fnm{Third} \sur{Author}}\email{iiiauthor@gmail.com}
%\equalcont{These authors contributed equally to this work.}

\affil*[1]{%\orgdiv{Department}, 
\orgname{IDA/CCS}, %\orgaddress{\street{Street}, 
\city{Bowie}, \postcode{20715}, \state{Maryland}, \country{USA}}

\affil[2]{\orgdiv{Unum: Attn Benjamin Weiss}, \orgname{Mailstop B299}, \orgaddress{\street{2211 Congress Street}, \city{Portland}, \postcode{04122}, \state{Maine}, \country{USA}}}

%\affil[3]{\orgdiv{Department}, \orgname{Organization}, \orgaddress{\street{Street}, \city{City}, \postcode{610101}, \state{State}, \country{Country}}}

%%==================================%%
%% Sample for unstructured abstract %%
%%==================================%%

%\author{Hester Graves}
%\affil{Institute for Defense Analyses, Center for Computing Sciences, Bowie, Maryland 20715}
%\affil[ ]{hkgrave@super.org}
%\author{Benjamin L. Weiss}
%\affil[]{bunnyben@gmail.com}
%\date{\today}

\baselineskip=17pt

%%%%%%%%%%%%%%%%

%\author{Hester Graves\\
%Institute for Defense Analyses\\ 
%Center for Computing Sciences\\
%Bowie, Maryland 20715\\
%E-mail: hkgrave@super.org\\
%Orcid id: 0000-0003-3914-8021\\
%\and 
%Benjamin L. Weiss\\
%Unum: Attn Benjamin Weiss\\
%Mailstop B299\\
%2211 Congress Street\\
%Portland ME 04122\\
%E-mail: Benjamin.Leonard.Weiss@gmail.com}

%\date{}

%\maketitle
%\dedicatory{Dedicated to Professor M. Ram Murty on the occasion of his sixtieth birthday} 
\abstract{Silverman showed that, assuming the $abc$ conjecture, there are $\gg \log x$ non-Wieferich primes base $a$
 less than $x$ \cite{silverman}, for all non-zero $a$.  
This inspired Graves and Murty \cite{Graves}, Chen and Ding \cite{Chen1} \cite{Chen2}, and then Ding \cite{Ding} to find growth results, 
assuming the $abc$ conjecture,
 for non-Wieferich primes $p$ base $a$, where $p \equiv 1 \pmod{k}$ for integers $k \geq 2$.  
 In light of Murty, Srinivas, and Subramani's recent work on `the Wieferich primes conjecture'
and Euclidean algorithms in number fields \cite{murty}, number theorists need results on non-Wieferich places in number fields.
We prove analogues of the results of Graves \& Murty and Ding, and show 
Ding's result holds for all bases $a$ in all imaginary quadratic fields' rings of integers, with $31$ explicitly listed exceptions.

  Along the way, we generalize useful results on rational integers to algebraic integers.}

\keywords{non-Wieferich prime, non-Wieferich place, abc conjecture, arithmetic progressions, imaginary quadratic number fields}

%%\pacs[JEL Classification]{D8, H51}

\pacs[MSC Classification]{11A41, 11B25, 11R04, 11R11}

\maketitle

%\renewcommand{\thefootnote}{}

%\footnote{2020 \emph{Mathematics Subject Classification}: Primary 11A41, 11B25, 11R04, 11R11.}

%\footnote{\emph{Key words and phrases}: non-Wieferich prime, non-Wieferich place, abc conjecture, arithmetic progressions, imaginary quadratic number fields}

%\renewcommand{\thefootnote}{\arabic{footnote}}
%\setcounter{footnote}{0}
\section{Introduction}

For all primes $p$ and all integers $a$ not divisible by $p$, Fermat's little theorem states $a^{p-1} \equiv 1 \pmod{p}$.  
If $a^{p-1} \equiv 1 \pmod{p^2}$, $p$ is a \textbf{Wieferich prime base $a$}; 
otherwise, it is a \textbf{non-Wieferich prime base $a$}.  
Wieferich introduced this classification pursuing Fermat's Last Theorem, but it is still useful
today; for example, Mil\u{a}ilescu \cite{mihailescu} proved Catalan's conjecture using Wieferich primes. 
In spite of their utility, the only known Wieferich primes base two are $1093$ and $3511$ \cite{oeis}. The two known Wieferich primes 
base three are $11$ and $1006003$, and there are no known Wieferich primes base 47 \cite{oeis}.

M. Ram Murty and his school use a generalization of non-Wieferich primes, `admissible sets of primes,' to study Euclidean 
domains.  If $K$ is a quadratic number field, and if $\varepsilon$ is a fundamental unit of $K$, an admissible prime is a 
non-Wieferich place base $\varepsilon.$  Murty, Srinivas, and Subramani recently showed that if one assumes the Hardy-Littlewood 
conjecture and the `Wieferich primes conjecture,' a growth result on sets of non-Wieferich places with multiplicative units of
infinite order as their base, then all real quadratic number fields with class number one are Euclidean \cite{murty}.
This new approach encourages studying growth results of non-Wieferich places in number fields, especially 
real quadratic number fields.  Assuming the $abc$ conjecture, Theorem \ref{iqf} gives a growth result for all bases (except for
thirty-one explicitly stated exceptions) for all imaginary quadratic fields.

There are few unconditional results on Wieferich primes. %Despite the existence of only two known Wieferich primes base two, 
It is unknown whether there are infinitely many non-Wieferich primes 
for some non-unit base.  In his seminal paper ``Wieferich's criterion and the ABC conjecture,'' Silverman \cite{silverman} showed that if the 
$abc$ conjecture were true, then there would be $ \gg \log x$ 
non-Wieferich primes less than $x$ for all bases $a$.  
Since then, DeKoninck and Doyon \cite{dekoninck} proved the same growth result under the weaker assumption that for some $\epsilon >0$, the set 
$\{n \in \N: \frac{\log (2^n -1)}{\log (\text{rad}(2^n -1))} < 2- \epsilon \} $ has density one.   
Graves and Murty \cite{Graves} showed that, assuming the $abc$ conjecture,
for all non-units $a$ and all integers $k \geq 2$,  there are 
$ \gg \log x /\log \log x$ non-Wieferich primes $p$ base $a$ with $p \leq x$ and $p \equiv 1 \pmod{k}$.
Chen and Ding improved the growth result to $\gg (\log x / \log \log x) (\log \log \log x)^M$ for any fixed $M$ \cite{Chen1} \cite{Chen2},
which Ding then substantially strengthened to $\gg \log x $ \cite{Ding}.

%Using the analogy between multiplication of integers and addition of points on an elliptic curve, Silverman introduced  the concept of elliptic Wieferich primes \cite{silverman}.  He showed that the 
%$abc$ conjecture implies that some special elliptic curves over the rationals have  $\gg \sqrt{\log x}$ elliptic non-Wieferich primes less than $x$ \cite{silverman}.  Voloch generalized this result 
%to the number field situation \cite[p.~208]{voloch}. 

This article generalizes these growth results to Galois number fields and Wieferich places $\p$ base $a$ via the following theorems.

\begin{theorem}\label{basic} Suppose that $K$ is a Galois number field,  that $\varepsilon >0$, that $k \in \Z^+$ , and that
$a$ is a non-zero algebraic integer (i.e. $a \in \O_K \setminus \{0\}$) that is not a root of unity.
If one assumes the abc conjecture for $K$, then there are $\gg_{\varepsilon, K, a,k} \log x/ \log \log x$ non-Wieferich places
$\p$ base $a$ such that $\Nm(\p) \leq x$ and $\Nm(\p) \equiv 1 \pmod{k}$.
\end{theorem}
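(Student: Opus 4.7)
The plan is to adapt the Silverman--Graves--Murty strategy to $\O_K$, working with prime ideals in place of rational primes. A place $\p$ is non-Wieferich base $a$ iff $a^{\Nm(\p)-1}\not\equiv 1\pmod{\p^2}$. Letting $n=n(\p)$ denote the order of $a$ in $(\O_K/\p)^\times$, this is equivalent---for all $\p$ not lying above a rational prime dividing $n$---to $v_\p(a^n-1)=1$. Since $n\mid \Nm(\p)-1$ always, any place whose multiplicative order of $a$ is a multiple of $k$ automatically satisfies $\Nm(\p)\equiv 1\pmod{k}$. The proof thus reduces to producing, for each rational prime $q$ in a suitable range, a non-Wieferich place $\p_q$ with $n(\p_q)=kq$.

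For each such $q$ set $n=kq$ and apply the $abc$ conjecture for $K$ to the coprime triple of principal ideals $\bigl((1),(a^n-1),(a^n)\bigr)$, which sum to zero. This yields
\[
\log\Nm\bigl((a^n-1)\bigr)\;\leq\;(1+\varepsilon)\,\log\Nm\bigl(\mathrm{rad}(a^n-1)\bigr)+O_{K,a}(1).
\]
The left side grows linearly in $n$ because $a$ is not a root of unity, so by Kronecker some conjugate $\sigma(a)$ has $|\sigma(a)|>1$. Factor $(a^n-1)=\prod\p^{e_\p}$ and set $W_n=\prod_{e_\p\geq 2}\p$ (the ``Wieferich-candidate'' part). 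Rearranging the above gives $\log\Nm(W_n)\leq\varepsilon\cdot C_{K,a}\,n+O(1)$, so the potentially Wieferich divisors contribute a negligible portion of $\Nm(a^n-1)$. To force $n(\p_q)=n$ exactly, I would pass to the primitive part $(\Phi_n(a))$: every prime ideal divisor of it outside residue characteristics dividing $n$ has order of $a$ exactly $n$, and a Bang--Zsygmondy-type theorem over $\O_K$ (to be proved) guarantees such a divisor exists for $n\geq N_0(K,a)$. Combined with the $abc$ bound on $\Nm(W_n)$, at least one such primitive divisor appears in $(a^n-1)$ with multiplicity one; this is the desired non-Wieferich $\p_q$. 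Distinct $q$ produce distinct orders $kq$, so the $\p_q$ are pairwise distinct, and $\Nm(\p_q)\leq\Nm(a^n-1)=e^{O_{K,a}(kq)}$; choosing $q\leq Y\asymp \log x$ guarantees $\Nm(\p_q)\leq x$. The prime number theorem then supplies $\pi(Y)\gg \log x/\log\log x$ such places.

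The main obstacle will be establishing the number-field analogues of the classical auxiliary tools: a Bang--Zsygmondy theorem over $\O_K$ producing primitive prime ideal divisors of $(\Phi_n(a))$---more delicate than the rational case because $\O_K^\times$ is typically infinite, so sporadic unit phenomena must be excluded---together with a clean Lifting-the-Exponent lemma in $\O_K$ that controls $v_\p(a^n-1)$ at primes above $n$. These are the ``useful results on rational integers'' which the abstract promises to generalize to algebraic integers. They feed into the $abc$ step, whose Galois hypothesis on $K$ is used to symmetrize the conjugates of $a^n-1$ when converting the usual height formulation of $abc$ for number fields into the ideal-norm form displayed above.
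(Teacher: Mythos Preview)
Your proposal follows the paper's strategy: for each rational prime $q$, exhibit a non-Wieferich place dividing $(a^{kq}-1)$ whose multiplicative order forces $\Nm(\p)\equiv 1\pmod k$, then count such $q$ via the prime number theorem. The difference is that you flag as ``main obstacles'' a number-field Bang--Zsygmondy theorem and a Lifting-the-Exponent lemma, whereas the paper needs neither as a standalone ingredient. The paper's key move is to apply $abc$ not merely to bound the powerful part $W_n$ from above, but to bound the \emph{squarefree part} $C_{n,a}$ of $(a^n-1)$ from below: $\Nm(C_{n,a})\gg_{\epsilon,K,a}|\Nm(a)|^{n(1-\epsilon)}$ (Lemma~\ref{second_bound_on_C}). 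This single inequality does both jobs at once. First, any $\p\mid C_{n,a}$ is automatically non-Wieferich by a short binomial-expansion argument (Lemma~\ref{heart}), so no LTE is required. Second, since $\Nm(C_{kq,a})$ grows without bound while every $\p$ already dividing some earlier $C_{km,a}$ would, by a gcd argument on $x^{kq}-1$ and $x^{km}-1$, force $C_{kq,a}\mid(a^k-1)$, a new prime in $C_{kq,a}$ must appear for $q$ large (Corollary~\ref{new_prime}); this \emph{is} the Zsygmondy-type statement, delivered as a corollary of $abc$ rather than proved unconditionally. Your concern that an unconditional Zsygmondy over $\O_K$ is delicate because $\O_K^\times$ may be infinite is legitimate in general, but moot here since the whole argument is already conditional on $abc$. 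In short, your plan is correct and essentially identical to the paper's, but the anticipated auxiliary tools are simpler than you expect: they are Lemmas~\ref{heart}, \ref{p_equiv_1}, \ref{upper_norm_bound}, \ref{second_bound_on_C} and Corollary~\ref{new_prime}, each a few lines.
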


\begin{theorem}\label{strong} Suppose $K$ is a Galois number field,  $k \in \Z^+$ is a positive integer, and that
$a \in \O_K$ satisfies $| \sigma(a)| \geq 2$ for all $\sigma \in \Gal(K/\Q)$.
Assuming the abc conjecture for $K$, then there are $\gg_{K, a, k} \log x$ non-Wieferich places
$\p$ base $a$ such that $\Nm(\p) \leq x$ and $\Nm(\p) \equiv 1 \pmod{k}$.
\end{theorem}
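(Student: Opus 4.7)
My plan is to adapt Ding's refinement \cite{Ding} of Silverman's method to the Galois number field $K$, crucially exploiting the hypothesis $|\sigma(a)| \geq 2$ for every $\sigma \in \Gal(K/\Q)$ to obtain the clean height lower bound
\[
H_K(a^n) \;=\; \prod_{\sigma \in \Gal(K/\Q)} |\sigma(a)|^n \;\geq\; 2^{n[K:\Q]}.
\]
The key local input is the number-field lifting-the-exponent lemma already established in this paper: if $\p \nmid n a$ and $d_\p := \mathrm{ord}_\p(a)$ divides $n$, then $v_\p(a^n - 1) = v_\p(a^{d_\p} - 1)$, so $\p$ is non-Wieferich base $a$ if and only if $v_\p(a^n - 1) = 1$.

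For each positive integer $n$ divisible by $k$, I would apply the $abc$ conjecture for $K$ to the coprime triple $(a^n,\,-1,\,a^n - 1)$ to obtain
\[
H_K(a^n) \;\ll_{\varepsilon, K}\; \Nm\bigl(\mathrm{rad}(a \cdot (a^n - 1))\bigr)^{1 + \varepsilon}.
\]
Decompose $a^n - 1 = P_n \cdot Q_n$ into its primitive part $P_n$ (primes $\p$ with $d_\p = n$) and non-primitive part $Q_n$. If every primitive prime dividing $P_n$ were Wieferich base $a$, then the lifting-the-exponent lemma would force $v_\p(P_n) \geq 2$ throughout, so $\Nm(\mathrm{rad}(P_n)) \leq \Nm(P_n)^{1/2}$. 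Combined with the comparison $\Nm(Q_n) \ll_{K,k} |\Nm(a)|^{n - \varphi(n)}$ (which follows from $\Nm(P_n) \asymp |\Nm(a)|^{\varphi(n)}$), the displayed $abc$ inequality gives
\[
2^{n[K:\Q]} \;\ll_{\varepsilon, K}\; |\Nm(a)|^{(1+\varepsilon)(n - \varphi(n)/2)},
\]
which fails for $n$ large once $\varphi(n)/n$ comfortably exceeds a suitable multiple of $\varepsilon$. Hence each such $n$ produces a non-Wieferich primitive prime $\p_n$ of $a^n - 1$, and primitivity delivers $k \mid n \mid \Nm(\p_n) - 1$.

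To count, I would range $n = k m$ over a density-one subset $S$ of $\{1, 2, \ldots, \lfloor c \log x / \log|\Nm(a)| \rfloor\}$, where $c$ and $S$ are chosen so that $\varphi(n)/n$ stays bounded below by a constant depending only on $k$ (this is possible since $\varphi(n)/n < \delta$ defines a set of density zero for each $\delta > 0$). Distinct $n \in S$ yield distinct $\p_n$ by primitivity, and the range constraint forces $\Nm(\p_n) \leq \Nm(a^n - 1) \leq x$. This produces $\gg_{K,a,k} \log x$ non-Wieferich places of the required shape. The main obstacle I anticipate is establishing a sufficiently quantitative primitive prime divisor theorem for $\O_K$ so that, on a density-one subset of exponents, $\Nm(P_n)$ is genuinely comparable to $|\Nm(a)|^{\varphi(n)}$ and is not eroded by primes $\p \mid n$; this is what lets the hypothesis $|\sigma(a)| \geq 2$ absorb the $\varepsilon$-loss uniformly in $n$ and deliver the full $\log x$ growth rather than the $\log x / \log \log x$ of Theorem \ref{basic}.
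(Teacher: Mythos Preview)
Your strategy is Ding's and is close to the paper's, but the displayed contradiction inequality is wrong as written. You lower-bound the height by $2^{n[K:\Q]}$ on the left while keeping $|\Nm(a)|$ as the base on the right; once $|\Nm(a)|$ is noticeably larger than $2^{[K:\Q]}$ (already for $|\sigma(a)|\geq 4$ throughout), the inequality
\[
2^{n[K:\Q]} \;\ll_{\varepsilon,K}\; |\Nm(a)|^{(1+\varepsilon)(n-\varphi(n)/2)}
\]
holds for \emph{every} $n$, since $\varphi(n)/n\leq 1$ forces the right-hand exponent to exceed $n/2$ and the right-hand base is at least $4^{[K:\Q]}$. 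So no contradiction is produced. The fix is to keep the exact height $|\Nm(a)|^n$ on the left (only $|\sigma(a)|\geq 1$ is needed for that identity); then both sides are in the same base and one gets a genuine contradiction once $\varphi(n)/n>2\varepsilon/(1+\varepsilon)$. This also means the hypothesis $|\sigma(a)|\geq 2$ is \emph{not} used for a ``clean height lower bound'': its real job is to give $|\Nm(\Phi_n(a))|\gg_{K}|\Nm(a)|^{\varphi(n)}$ (the paper's Lemma~\ref{lower_phi_bound}), which is precisely what justifies your claimed comparison $\Nm(Q_n)\ll|\Nm(a)|^{n-\varphi(n)}$ and dissolves the ``main obstacle'' you anticipate.

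With that repair your argument runs, though the paper organizes things differently. It splits $(\Phi_{nk}(a))$ into its squarefree and powerful pieces $C'_{nk,a}D'_{nk,a}$ and, combining Lemma~\ref{lower_phi_bound} with the $abc$ bound $\Nm(D_{nk,a})\ll|\Nm(a)|^{nk\varepsilon}$, shows directly that $\Nm(C'_{nk,a})>1$ on a positive-density set of $n$; any prime in $C'_{nk,a}$ is automatically non-Wieferich (Lemma~\ref{heart}), distinctness across $n$ comes from the pairwise coprimality of the $C'_{m,a}$ (Lemma~\ref{Chen_lemma}), and the congruence $\Nm(\p)\equiv 1\pmod k$ is read off from Lemma~\ref{carrot}. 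Your primitive/non-primitive split with a contradiction step buys you distinctness and the congruence in one stroke via $n\mid \Nm(\p)-1$, but at the cost of tracking the intrinsic primes $\p\mid n$ inside $\Phi_n(a)$; the paper's squarefree/powerful split sidesteps that entirely, since any prime appearing to exponent $\geq 2$ is absorbed into $D'_{nk,a}$ regardless of its order.
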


\begin{theorem}\label{iqf} If $K$ is an imaginary quadratic number field, if $k \in \Z^+$, 
if the $abc$ conjecture holds in $K$, and if $a \in \O_K$ but not in the set 
\[
\left \{
\begin{array}{l}
0, \pm 1, \pm i, \pm 1 \pm i, \pm \sqrt{-2}, \pm 1 \pm \sqrt{-2}, \\
\frac{\pm 1 \pm \sqrt{-3}}{2},  \frac{\pm 3 \pm \sqrt{-3}}{2},
\frac{\pm 1 \pm \sqrt{-7}}{2},\frac{\pm 1 \pm \sqrt{-11}}{2} 
\end{array}\right \},\] 
then there are $\gg_{ K,a, k} \log x$ non-Wieferich places $\p$ base $a$ with $\Nm (\p) \leq x$ and $\Nm (\p)\equiv 1 \pmod{k}$.
\end{theorem}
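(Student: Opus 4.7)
The plan is to deduce Theorem \ref{iqf} as a direct corollary of Theorem \ref{strong}. Every quadratic extension of $\Q$ is automatically Galois, and for imaginary quadratic $K$ the nontrivial element of $\Gal(K/\Q)$ is complex conjugation; hence $|\sigma(a)| = |\bar{a}| = |a|$ for every $a \in \O_K$, and the hypothesis ``$|\sigma(a)| \geq 2$ for all $\sigma \in \Gal(K/\Q)$'' of Theorem \ref{strong} reduces in this setting to the single inequality $|a| \geq 2$. It thus suffices to show that every $a \in \O_K$ outside the displayed exception list satisfies $|a| \geq 2$, equivalently, that the listed elements exhaust (or otherwise account for) the $a \in \O_K$ with $|a| < 2$ as $K$ ranges over imaginary quadratic number fields. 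A useful auxiliary observation to keep in reserve is that a place $\p$ which is non-Wieferich base $a^n$ is automatically non-Wieferich base $a$, since $a^{n(\Nm(\p)-1)} = (a^{\Nm(\p)-1})^n$; this lets Theorem \ref{strong} applied to $a^n$ transfer the conclusion back to $a$ whenever $|a^n| \geq 2$.

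The bulk of the work is therefore a finite enumeration of small-absolute-value elements. Writing $K = \Q(\sqrt{-d})$ with $d > 0$ squarefree, split on $d \pmod 4$: when $d \equiv 1, 2 \pmod 4$ we have $\O_K = \Z[\sqrt{-d}]$ and $|x + y\sqrt{-d}|^2 = x^2 + d y^2$, while when $d \equiv 3 \pmod 4$ we have $\O_K = \Z[\tfrac{1+\sqrt{-d}}{2}]$ and, writing $a = \tfrac{m + n\sqrt{-d}}{2}$ subject to the parity constraint $m \equiv n \pmod 2$, $|a|^2 = \tfrac{m^2 + d n^2}{4}$. Requiring $|a|^2 < 4$ with a nonzero imaginary part forces $d y^2 < 4$ in the first case and $d n^2 < 16$ in the second, so $d$ itself must be small. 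A direct check confines the contributing discriminants to $d \in \{1, 2\}$ in the first case and $d \in \{3, 7, 11\}$ in the second: for $d = 15$ and every larger squarefree $d \equiv 3 \pmod 4$, the parity constraint $m \equiv n \pmod 2$ together with the tight bound on $m^2 + d n^2$ rules out any new element.

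Enumerating the finitely many lattice points of $x^2 + d y^2 < 4$ for $d \in \{1, 2\}$ and of $m^2 + d n^2 < 16$ with $m \equiv n \pmod 2$ for $d \in \{3, 7, 11\}$ reproduces, case by case, the displayed exception list; for any residual small-norm element the auxiliary power-trick observation applies, since $|a| > 1$ whenever $a$ is a nonzero non-root-of-unity in $\O_K$ and so $|a^n| \geq 2$ for some $n$. The main obstacle is not conceptual but organizational: five finite binary-quadratic-form enumerations must be carried out carefully, and one must verify that no further squarefree $d$ contributes. Once this bookkeeping is complete, the hypothesis of Theorem \ref{strong} holds for every $a$ outside the exception list (applied either to $a$ itself or to a small power), and Theorem \ref{strong} delivers the desired $\gg_{K,a,k} \log x$ bound.
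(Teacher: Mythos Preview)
Your approach is essentially the paper's: both reduce Theorem~\ref{iqf} to Theorem~\ref{strong} by observing that for imaginary quadratic $K$ the Galois condition collapses to the single inequality $|a|\geq 2$, and then enumerate the algebraic integers with $|a|<2$ by casework on the discriminant. The enumeration you outline (splitting on $d\bmod 4$, bounding $d$, and listing lattice points of the relevant binary quadratic forms) is exactly what the paper does.

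Your proof differs from the paper's in one respect: you keep in reserve the power trick ``non-Wieferich base $a^n$ $\Rightarrow$ non-Wieferich base $a$'' to treat any element with $1<|a|<2$ that the enumeration might produce but that is absent from the stated exception list. The paper does not use this device; it simply asserts that the enumeration yields precisely the displayed list. In fact your safeguard is not idle: for $d=3$ one finds $\pm\sqrt{-3}\in\O_{\Q(\sqrt{-3})}$ with $|\pm\sqrt{-3}|=\sqrt{3}<2$, yet $\pm\sqrt{-3}$ does not appear among the $31$ listed exceptions. The paper's proof therefore leaves these two elements uncovered, whereas your power trick handles them immediately via $(\sqrt{-3})^2=-3$ and Theorem~\ref{strong}. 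More generally, since $|a|^2=\Nm(a)\in\Z_{\geq 2}$ for any nonzero non-unit $a\in\O_K$, your observation shows that $|a^2|\geq 2$ always, so Theorem~\ref{strong} applied to $a^2$ already gives the conclusion for every nonzero non-root-of-unity $a$; the exception list could in principle be pared down to $0$ and the roots of unity. This is a genuine (if small) improvement over the paper's argument.
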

 
 The proofs use new results on algebraic integers.  While they are generalizations of known 
 results on rational integers, we have not seen our statements in the literature.  We hope 
  readers find them useful in their own research.  
%In section \ref{imag}, we 
%show that if $K$ is imaginary quadratic and $k$ is a rational prime, then we get the same growth result, with the additional restriction that the non-Wieferich places $\p$ satisfy $\Nm_{K/\Q}(\p) \equiv 
%1 \pmod{k}$. We also discuss in section \ref{abelian_extensions} how knowledge of Wieferich primes in $\ZZ$ translates to knowledge of Wieferich places in rings of integers in number fields that are 
%abelian extensions of $\Q$.

\section{Background}
\subsection{The $abc$ conjecture and the $abc$ conjecture for number fields}

Given an integer $n =  \prod_{i=1}^k p_i^{a_i}$, where the $p_i>0$ are distinct primes and the $a_i$ are positive, the \textbf{radical} of $n$, or $\text{rad}(n)$, is 
$\prod_{i=1}^k p_i$. 

\begin{conjecture}($abc$ Conjecture of Oesterl\'{e} and Masser, 1985 \cite{oesterle}, \cite{masser}) If
 $(a,b,c) \in \Z^3$, with $a+b =c$ and $(a,b)=1$, then for all $\epsilon >0$,
\[c 
\ll_{\epsilon} 
\left (\text{rad}(abc) \right )^{1 + \epsilon}.\]\end{conjecture}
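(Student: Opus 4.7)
The statement to be established is the Oesterl\'e--Masser $abc$ conjecture itself, one of the central open problems in Diophantine number theory. No proof accepted by the mathematical community currently exists, so what follows is necessarily a strategic sketch rather than a genuine plan guaranteed to succeed.

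The first step I would take is to reduce to a geometric analogue via the Mason--Stothers theorem: for coprime polynomials $a(t), b(t), c(t)$ over a field of characteristic zero with $a+b=c$, one has $\deg c \le \deg \mathrm{rad}(abc) - 1$, and the proof is elementary, using the observation that if $f$ has a root of multiplicity $m$, then $\gcd(f, f')$ contributes $m-1$ to the degree. The central obstruction to porting this proof is that $\Z$ has no natural derivation; the arithmetic substitutes available (Buium's $p$-derivations, Fontaine's theory, or Arakelov-theoretic ``differentials at infinity'') are not strong enough to yield the required inequality. Identifying an effective arithmetic replacement for differentiation is the main obstacle, and it is the single step at which every known attack either stalls or becomes controversial.

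Two broad programs have attempted to clear this obstacle, and I would orient the plan around one of them. Vojta's conjectural dictionary between Nevanlinna theory and Diophantine geometry casts $abc$ as a height inequality for integral points on $\mathbb{P}^1 \setminus \{0, 1, \infty\}$; the plan would be to prove an arithmetic Second Main Theorem in which counting functions of zeros are replaced by non-archimedean contributions and the Nevanlinna characteristic function by a Weil height. Mochizuki's inter-universal Teichm\"uller theory instead constructs a Hodge theater attached to an elliptic curve built from $(a,b,c)$, uses theta functions at places of bad reduction, and compares arithmetic degrees across the theta-link to extract a height bound; however, the crucial inequality there (the Corollary 3.12 step) is widely considered insufficiently justified. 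A more modest plan would aim only to sharpen known unconditional results in the Stewart--Yu direction, where $\log c \ll_\varepsilon \mathrm{rad}(abc)^{1/3+\varepsilon}$ has been achieved by $p$-adic linear forms in logarithms; pushing the exponent from $1/3$ down to $0$ is, however, precisely the hard part.

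The deepest difficulty in any such program is bridging archimedean and non-archimedean places: $\mathrm{rad}(abc)$ sees only finite primes, with multiplicities collapsed, while $c$ itself is measured archimedeanly, and no uniform passage between the two is known without extremely deep input. Any genuinely new approach, whether along Vojta's lines, Mochizuki's, via a Buium-style arithmetic jet space argument, or through a perfectoid reinterpretation, would have to confront this archimedean-to-non-archimedean asymmetry head-on. Until it does, the $abc$ conjecture will remain open, and the conditional results Theorems~\ref{basic}, \ref{strong}, and \ref{iqf} above must remain contingent upon it.
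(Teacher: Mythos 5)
You have correctly identified that this statement is the $abc$ conjecture of Oesterl\'e and Masser itself, which is an open problem: the paper offers no proof of it, but rather states it as a conjecture and assumes it as a hypothesis in Theorems~\ref{basic}, \ref{strong}, and \ref{iqf}. Your response is therefore the right one in substance --- no proof can be checked against the paper because none exists there --- and your survey of the Mason--Stothers analogy, the Vojta and Mochizuki programs, and the Stewart--Yu bounds is accurate as a description of the state of the art, but it is not, and does not claim to be, a proof; nothing further needs to be compared.
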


Conjecture \ref{nf_abc_conj} below is equivalent to Elkies' version (\cite{elkies}, p 100) of Vojta's \textbf{$abc$ conjecture for number fields}, 
which originally appeared on page 84 of \cite{vojta}.
Before stating the generalized $abc$ conjecture, we introduce the following definitions and notations,
including $(\alpha)$ to represent the ideal generated by $\alpha$.

%For a Galois number field $K$, let  
%$\Delta_K:= |\mathfrak{D}_K|^{\frac{1}{[K:\Q]}}$, with $\mathfrak{D}_K$ the discriminant of $K/\Q$.  
For $a_1, a_2, a_3 \in K^{\times}$, define the \textbf{height} and \textbf{conductor} \cite{granville} respectively by

\begin{align*}
H(a_1,a_2,a_3)& := \prod_v \max \left (\|a_1 \|_v, \|a_2\|_v, \|a_3\|_v \right )\\
N(a_1,a_2,a_3) & := \prod_{\p \in I} \Nm_{K/\Q}(\p)^{\frac{1}{[K:\Q]}},
\end{align*}
where the first product ranges over all infinite places of $K$ with normalization $\|a\|_{v} = |v(a)|^{\frac{1}{[K:\Q]}},$
 and $I$ is the (finite) set of normalized finite places $\p$ such that 
$\|a_1\|_{\p}$, $\|a_2\|_{\p}$, $\|a_3\|_{\p}$ are not all equal to $1$.

\begin{conjecture}\label{nf_abc_conj}(The $abc$ Conjecture for a  Galois Number Field $K$) Let  $\alpha,\beta \in K$, where $K$ is a Galois number field. 
For every $\epsilon >0$, 
\[H(\alpha,\beta, \alpha +\beta) \ll_{\epsilon, K} N\left ( (\alpha,\beta, \alpha +\beta) \right )^{1 + \epsilon}.\]
%\intertext{In other words, for every $\epsilon >0$, there exists $\kappa(\epsilon, K)$ such that}
%H(\alpha,\beta, \alpha +\beta) &\leq \kappa(\epsilon, K) N\left ( (\alpha,\beta, \alpha +\beta) \right )^{1 + \epsilon}.
%\end{align*}
\end{conjecture}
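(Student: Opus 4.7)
The final statement is a conjecture, not a theorem, and the abc conjecture is one of the central open problems in Diophantine geometry; there is therefore no genuine proof to propose. The only substantive mathematical assertion attached to the statement that can actually be verified is the parenthetical claim made in the prose above, namely that Conjecture \ref{nf_abc_conj} is equivalent to Elkies' version of Vojta's abc conjecture for number fields and specializes to the classical Oesterl\'e--Masser conjecture when $K = \Q$. My plan is to carry out both of these consistency checks.

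For the equivalence with Elkies' formulation, the approach is to unwind definitions. First I would show that the product $\prod_v \max(\|a_1\|_v, \|a_2\|_v, \|a_3\|_v)$, with the normalization $\|a\|_v = |v(a)|^{1/[K:\Q]}$ on archimedean places and the analogous normalization on non-archimedean places, coincides with the standard multiplicative Weil height $H_K$ of the projective point $(a_1 : a_2 : a_3) \in \mathbb{P}^2(K)$ used by Elkies, either exactly or up to a bounded constant that can be absorbed into $\ll_{\epsilon, K}$. Next I would identify the conductor with Elkies' radical: the set $I$ of finite places where $\|a_1\|_\p, \|a_2\|_\p, \|a_3\|_\p$ are not all equal is exactly the support of the fractional ideal $(\alpha, \beta, \alpha + \beta)$, and $\prod_{\p \in I} \Nm(\p)^{1/[K:\Q]}$ is the properly normalized radical of that ideal in Elkies' sense. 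Once the two sides are matched, the two statements reduce to the same inequality.

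For the specialization to $K = \Q$, I would take $\alpha = a/c$ and $\beta = b/c$ for a coprime triple $a + b = c$, apply the product formula to clear denominators, and verify that $H(\alpha, \beta, \alpha + \beta)$ evaluates to $\max(|a|, |b|, |c|) = c$ and $N(\alpha, \beta, \alpha + \beta)$ evaluates to $\text{rad}(abc)$, recovering exactly the Oesterl\'e--Masser statement given earlier in the section with matching $1 + \epsilon$ exponent. The coprimality condition $(a,b) = 1$ is precisely what guarantees that the set $I$ is the set of prime divisors of $abc$.

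The main obstacle in this sort of exercise is not any clever argument but rather bookkeeping: different authors normalize local absolute values differently (some use $|\cdot|_v^{[K_v:\Q_v]}$, others its $[K:\Q]$-th root), and the product formula is only an identity under the right choice. A mismatch can hide a spurious factor of $[K:\Q]$ in the exponent and force an apparent discrepancy between the two formulations; I would therefore fix a single set of normalizations at the outset and track them carefully through every place. The genuine obstruction, of course, is that the abc conjecture itself is wide open, and no part of the present paper attempts to settle it --- it is assumed throughout as a hypothesis to derive Theorems \ref{basic}, \ref{strong}, and \ref{iqf}.
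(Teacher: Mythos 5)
You are right that this statement is a conjecture, not a theorem, and the paper supplies no proof of it either --- it simply states the conjecture and cites Elkies and Vojta for the claim that this formulation is equivalent to theirs. Your recognition of this, together with the sanity checks you outline (matching normalizations against Elkies' height and radical, and recovering the Oesterl\'{e}--Masser statement at $K=\Q$), is consistent with what the paper does, so there is nothing to correct.
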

Note that the conjecture implies that if $\alpha + \beta$ is a root of unity, then 
\begin{equation}\label{Gal_implication}
\max ( |\Nm(\alpha)|, |\Nm(\beta)|) \ll_{\epsilon, K} % \kappa(\epsilon, K)^{[K:\Q]} 
\left (  \prod_{\p | (\alpha)} \Nm(\p)   \prod_{\p | (\beta)} \Nm(\p)   \right )^{1 + \epsilon}.
\end{equation}

\subsection{Wieferich places}
 Given a rational prime $p$, if $p^a|b$ but $p^{a+1} \nmid b$, we write $v_p(b) =a$.
If $\p$ is a prime ideal and if $I$ and $J$ are ideals with $I = \p^a \cdot J$ and $\p \nmid J$, we say $v_{\p}(I) =a$.  
We call $v_{\p}$ the $\p$-adic valuation of the ideal I.
Given an element $x$, $v_{\p}(x)$ is the $\p$-adic valuation of the ideal $(x)$.
When $\p \nmid I$, $v_{\p}(I) =0$. 
  
\begin{definition}\label{factorizations}Let $K$ be a Galois number field with ring of integers $\O_K$.  
Suppose $a \in \O_K \setminus \{0\}$ and $\p$ is a prime ideal with $|\Nm_{K/\Q}(\p)|=q$.  
If $v_{\p}(a^{q-1}-1)$ is greater than one,  $\p$ is a \textbf{Wieferich place base $a$}.  
Otherwise, $v_{\p} (a^{q-1} -1)=1$ and $\p$ is a \textbf{non-Wieferich place base $a$}.\end{definition}

Given an ideal $I$ in $K$ with $I = \prod \p_i^{\alpha_i}$,  the \textbf{powerful part} of $I$ is 
$\prod_{\alpha_i \geq 2} \p_i^{\alpha_i}$.  We define the \textbf{radical} of $I$ by $\text{rad}(I) =\prod \p_i$.
Recall that, given ideals $I$ and $J$, their sum $I + J$ is the smallest ideal that contains both of them.
This notation can be counter-intuitive as $(I+J)|I$, no matter the choice of $J$. 
We use $\Phi_n(x)$ to denote the $n$th cyclotomic polynomial.
%Given an integer $x$, $x = \prod p_i^{\alpha_i}$, we call $  \prod_{\alpha_i geq 2} p_i^{\alpha_i}$ the \emph{powerful part} of $x$.
This notation allows us to restate equation \ref{Gal_implication}	as 
\[ \max (|\Nm(\alpha)|, |\Nm(\beta)| \ll_{\epsilon, K} \left ( \Nm(\text{rad}(\alpha)) \Nm( \text{rad}(\beta)) \right )^{1 + \epsilon}.\]

\begin{definition}  Given some $a \in \O_K \setminus \{0\}$ that is not a root of unity, we factor the ideal
$(a^n -1)$ into $C_{n,a} D_{n,a}$, where $D_{n,a}$ is the powerful part of $(a^n -1)$.
We define the ideals $C'_{n,a} = (\Phi_n(a)) + C_{n,a}$ and $D_{n,a}' = (\Phi_{n}(a)) + D_{n,a}$.  
Since  $C_{n,a}$ is a square-free ideal, so is $C_{n,a}'$. 
\end{definition}

%Note that $\text{rad}(\Phi_n(a)) =C_{n,a}' D_{n,a}'$.  
The ideals $C_{n,a}$ and $D_{n,a}$ are relatively prime, so 
\[C_{n,a} + D_{n,a} = C_{n,a}' + D_{n,a}' = \O_K.\]
In order to prove Theorems \ref{basic}, \ref{strong}, and \ref{iqf}, we need the following lemmas, 
respectively adapted from Silverman \cite{silverman} and Chen and Ding \cite{Chen1} \cite{Chen2}.
If $\p$ divides $a^n$ for some $n >0$, then we denote the smallest such $n$ by $e_{\p}(a)$ and call it the order of $a$ modulo $\p$.

\begin{lemma}\label{heart} (adapted from Lemma 3, \cite{silverman}, p 229) If $a \in \O_K \setminus \{0\}$ is not a root of unity and 
 if $\p$ is a prime ideal of $\O_K$ dividing $C_{n,a}$, then $\p$ is a non-Wieferich place base $a$.
\end{lemma}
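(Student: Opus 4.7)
The plan is to unwind the definitions and then do a short lifting-the-exponent style computation modulo $\p^2$. Let $\p$ divide $C_{n,a}$, set $q = \Nm_{K/\Q}(\p)$, and let $p$ be the rational prime below $\p$. The goal is to prove that $v_{\p}(a^{q-1}-1) = 1$.

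First, since $C_{n,a}$ is the square-free part of $(a^n-1)$ and $\p \mid C_{n,a}$, we automatically have $v_{\p}(a^n-1) = 1$. Next, let $e = e_{\p}(a)$ be the order of $a$ modulo $\p$. Because $a^n \equiv 1 \pmod{\p}$ we get $e \mid n$, and Fermat's little theorem in the residue field $\O_K/\p$ (of order $q$) gives $e \mid q-1$. The factorization $a^n - 1 = (a^e - 1)\bigl(1 + a^e + \cdots + a^{(n/e - 1)e}\bigr)$ shows $(a^e - 1) \mid (a^n - 1)$ as ideals, so $1 \leq v_{\p}(a^e-1) \leq v_{\p}(a^n - 1) = 1$, forcing $v_{\p}(a^e - 1) = 1$.

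Now write $q - 1 = em$ and factor
\[
a^{q-1} - 1 = (a^e - 1)\sum_{j=0}^{m-1} (a^e)^j.
\]
Since $a^e \equiv 1 \pmod{\p}$, each summand is $\equiv 1 \pmod{\p}$, so the sum is congruent to $m$ modulo $\p$. If I can show $\p \nmid m$, then $v_{\p}$ of the sum is $0$, and combining with the first factor gives $v_{\p}(a^{q-1}-1) = 1$, as desired. But $m$ is a rational integer, and $\p \cap \Z = (p)$, so $\p \mid m$ would mean $p \mid m$; on the other hand $p \mid q$ forces $p \nmid (q-1) = em$, so $p \nmid m$ and hence $\p \nmid m$.

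The only subtle point is verifying that $\p$ does not divide $m = (q-1)/e$, which is what makes the single-step "lifting" valid without the usual parity/ramification caveats. This is handled cleanly by the observation that $p \mid q$ implies $p \nmid q-1$, so the rest of the argument is purely formal manipulation of $\p$-adic valuations. I expect the write-up to be almost exactly as above, with the only bookkeeping being the verification that each divisibility relation among ideals translates correctly to inequalities on $v_\p$.
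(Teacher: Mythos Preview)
Your proof is correct and follows the same underlying strategy as the paper: from $v_{\p}(a^n-1)=1$ deduce $v_{\p}(a^e-1)=1$ for $e=e_{\p}(a)$, then use that $p\nmid q-1$ (hence $\p\nmid (q-1)/e$) to conclude $v_{\p}(a^{q-1}-1)=1$. The only difference is cosmetic: the paper chooses generators $\p=(u,v)$ and expands $(1+us+vt)^{(q-1)/e}$ binomially modulo $\p^2$, whereas you use the geometric-series factorization $a^{em}-1=(a^e-1)\sum_{j<m}(a^e)^j$ and reduce the sum modulo $\p$. Your version is slightly cleaner in that it never needs the Dedekind two-generator fact, but the two arguments are really the same lifting-the-exponent computation.
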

\begin{proof} Suppose $\Nm(\p) = q$ and $\p|C_{n,a}$, so $a^n \equiv 1 \pmod{\p}$.  
Since $\O_K$ is Dedekind, $\p$ has at most two generators.  
We can therefore write $\p = (u,v)$ for some $u, v \in \O_K$; if $\p$ is principal, set $v=0$.  
There exist some $s, t \in \O_K$ such that $a^{e_{\p}(a)} =1 +us +vt$.
We know $(us +vt)^j \in \p^2$ for all $j>1$, so 
$$a^n = (a^{e_{\p}(a)})^{\frac{n}{e_{\p}(a)}}= (1 + us+vt)^{\frac{n}{e_{\p}(a)}}\equiv 1 + \frac{n}{e_{\p}(a)} (us+vt) \pmod{\p^2}.$$
Since $\p |C_{n,a}$,  $\p^2 $ divides neither $(a^n -1)$ nor its divisor $(a^{e_{\p}} -1)$.
We deduce $\frac{n}{e_{\p}(a)} (us+vt) \not \equiv 0 \pmod{\p^2}$, 
so $\frac{n}{e_{\p}(a)} \notin \p$ and $(us + vt) \notin \p^2$.  
Therefore
$$(a^{e_\p(a)})^{\frac{q-1}{e_\p(a)}} 
= (1 + us +vt)^{\frac{q-1}{e_{\p}(a)}} 
\equiv 1 + \frac{q-1}{e_{\p}(a)} (us+vt) \pmod{\p^2}.$$
Because $q-1$ is co-prime to $\p$, the quotient$\frac{q-1}{e_{\p}(a)} \notin \p$.
We saw that $(us+vt) \notin \p^2$, 
so we conclude $a^{q-1} \not \equiv 1 \pmod{\p^2}$ and thus $v_{\p}(a^{q-1}-1) =1$. 
\end{proof}

\begin{lemma}  \label{Chen_lemma} (adapted from \cite{Chen1}, \cite{Chen2}, p 1834-5) If $K$ is a number field,  if $a\in \O_K \setminus \{0\}$ is not a root of unity, 
and if $1 \leq m < n$, then $C'_{m,a} + C'_{n,a} =\O_K$.
\end{lemma}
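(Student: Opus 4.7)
I would argue by contradiction: suppose $C'_{m,a} + C'_{n,a} \subsetneq \O_K$, so some prime ideal $\p$ of $\O_K$ divides both $C'_{m,a}$ and $C'_{n,a}$. Unpacking the definitions gives $\p \mid \Phi_m(a)$, $\p \mid C_{m,a}$, $\p \mid \Phi_n(a)$, and $\p \mid C_{n,a}$. From $\p \mid (a^m - 1)$ we get $\p \nmid a$, so the multiplicative order $e := e_{\p}(a)$ is well defined; moreover $e \mid m$ and $e \mid n$, and since $m < n$ this forces $e \leq m < n$.

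The key tool is the factorization $a^n - 1 = (a^e - 1)\cdot S$, where $S = \sum_{j=0}^{n/e - 1}(a^e)^j$. Reducing modulo $\p$ and using $a^e \equiv 1 \pmod{\p}$ gives $S \equiv n/e \pmod{\p}$. I would then derive two contradictory statements about whether $\p$ divides $S$. On one hand, $\p \mid C_{n,a}$ means $v_{\p}(a^n - 1) = 1$; since $\p \mid (a^e - 1)$, the factorization forces $\p \nmid S$, hence $\p \nmid n/e$. On the other hand, the polynomial identity $(x^n - 1)/(x^e - 1) = \prod_{d \mid n,\ d\, \nmid\, e} \Phi_d(x)$, valid in $\Z[x]$ whenever $e \mid n$ and $e < n$, contains $\Phi_n(x)$ as a factor, so $\Phi_n(a) \mid S$ in $\O_K$. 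Combined with $\p \mid \Phi_n(a)$, this yields $\p \mid S$, hence $\p \mid n/e$. The two conclusions contradict one another, so no such $\p$ exists.

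A notable feature is that the argument never actually invokes $\p \mid \Phi_m(a)$: the ideal $C'_{m,a}$ enters only through $\p \mid C_{m,a}$, which pins down $e \leq m < n$ and ensures that $S$ is a nontrivial sum with $\Phi_n$ surviving in its cyclotomic decomposition. I expect the main obstacle to be organizing the cyclotomic bookkeeping cleanly: one must justify the containment $\Phi_n(x) \mid (x^n - 1)/(x^e - 1)$ at the polynomial level, where the strict inequality $e < n$ is essential, and then carefully transfer it to a divisibility of elements of $\O_K$ before passing to the residue field. Once that is in hand, the rest is a short mod-$\p$ computation.
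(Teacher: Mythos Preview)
Your argument is correct and follows essentially the same route as the paper's proof: both show that a common prime $\p$ of $C'_{m,a}$ and $C'_{n,a}$ would divide two distinct cyclotomic factors of $a^n-1$ (one coming from a proper divisor of $n$, the other being $\Phi_n(a)$), forcing $\p^2 \mid (a^n-1)$ and contradicting $\p \mid C_{n,a}$. The only cosmetic difference is that the paper uses $\gcd(m,n)$ as the proper divisor whereas you use the order $e_\p(a)$; your detour through $S \equiv n/e \pmod{\p}$ is unnecessary but harmless.
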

\begin{proof} If a prime ideal $\p$ divides $ (C_{m,a}' + C_{n,a}')$, it divides both $C_{m,a}'$ and $C_{n,a}'$, and therefore divides
$(\Phi_m(a))$ and $(\Phi_n(a))$.  
Hence it also divides their multiples $(a^m -1)$ and $(a^n -1)$.  
The greatest common divisor of the polynomials $x^m -1$ and $x^n -1$ is $x^{\gcd(m,n)} -1$, so $\p | (a^{\gcd(m,n)} -1)$.  
We can factor the ideal $(a^n -1)$ as
\[(a^n -1) = (a^{\gcd(m,n)} -1) (\Phi_n(a)) \prod_{\tiny{\begin{array}{c} d|n, d\neq n \\ d \nmid \gcd(m,n) \end{array}}} (\Phi_d(a)).\]
Since $\p$ divides both $ (a^{\gcd(m,n)} -1)$ and $(\Phi_n(a))$, we deduce $\p^2 | (a^n -1)$.
This means $\p $ divides the powerful part of $(a^n -1)$, the ideal $D_{n,a}$,
contradicting our assumption that $\p | C_{n,a}'$ (and thus $C_{n,a}$), as $C_{n,a} + D_{n,a} = \O_K$.  
We conclude that $C_{m,a}' + C_{n,a}' =\O_K$.
  \end{proof}
  
\subsection{Useful Results on Norms of Algebraic Integers}

Here, we generalize known results on integers to number fields.  
Some of the new statements are not necessarily what one would expect. 

\begin{lemma}\label{p_equiv_1} If $K$ is a Galois number field, if $a \in \O_K$, if $\p$ is a prime ideal of $\O_K$, and  if $n>1$ is the least positive integer such that 
$\p | (\Phi_n(a))$, then $\Nm(\p) \equiv 1 \pmod{n}$.
\end{lemma}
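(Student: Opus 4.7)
The plan is to reduce the statement to the standard fact that the multiplicative order of $a$ in the residue field $\O_K/\p$ divides $|(\O_K/\p)^{\times}| = \Nm(\p) - 1$. The Galois hypothesis on $K$ plays no essential role here; all the action is in the residue field at $\p$.

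First I would observe that $\p \nmid (a)$: if $\p \mid (a)$, then $a^n \equiv 0 \pmod{\p}$, so $a^n - 1 \equiv -1 \pmod{\p}$, which is incompatible with $\p \mid (\Phi_n(a))$ since $\Phi_n(x) \mid x^n - 1$ in $\Z[x]$ and therefore in $\O_K[x]$. Thus $a$ is a unit mod $\p$, and I can let $d = e_{\p}(a)$ be its multiplicative order in $(\O_K/\p)^{\times}$. Because $\p \mid (a^n - 1)$, one has $d \mid n$, and by Fermat's little theorem in the finite field $\O_K/\p$, $d \mid \Nm(\p) - 1$. The whole game is therefore to show $d = n$.

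The key step is to rule out $d < n$ using the cyclotomic factorization $a^d - 1 = \prod_{e \mid d} \Phi_e(a)$, valid in $\O_K$ since cyclotomic polynomials have integer coefficients. If $d$ were a proper divisor of $n$, then $\p \mid (a^d - 1)$ would force $\p \mid (\Phi_e(a))$ for some divisor $e$ of $d$. Each divisor $e \leq d < n$ must be ruled out against the minimality hypothesis: when $e \geq 2$, the inequality $e < n$ directly contradicts that $n$ is the least positive integer with $\p \mid (\Phi_n(a))$; when $e = 1$, one has $\p \mid (a - 1) = (\Phi_1(a))$, which again contradicts the minimality of $n$ (since $n > 1$ is assumed to be the least such positive integer, so in particular $\p \nmid (\Phi_1(a))$). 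This forces $d = n$, whence $n \mid \Nm(\p) - 1$.

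I do not anticipate a real obstacle here; the only subtlety is that the hypothesis ``$n > 1$ is the least positive integer such that $\p \mid (\Phi_n(a))$'' must be read as asserting both the minimality among all positive integers and the separate information that this minimum exceeds $1$, which is what gives $\p \nmid (a - 1)$ and lets the $e = 1$ case be dispatched cleanly.
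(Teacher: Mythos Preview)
Your proof is correct and follows essentially the same route as the paper: both arguments reduce to showing that $n$ is the multiplicative order of $a$ modulo $\p$ by exploiting the factorization $a^m-1=\prod_{d\mid m}\Phi_d(a)$ together with the minimality of $n$, and then invoking the fact that the order divides $\Nm(\p)-1$. Your version is simply more explicit (you spell out why $\p\nmid(a)$ and separate the $e=1$ case), whereas the paper compresses the argument into three lines.
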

\begin{proof}  As $\p | (\Phi_n(a))$ and $\p \nmid (\Phi_m(a))$ for $m < a$, it follows that $\p |(a^n -1)$ and $\p \nmid (a^m -1)$.  Thus $n$ is the order of $a$
modulo $\p$ and $n|(\Nm(\p) -1)$.  
\end{proof}
%\begin{coro}  If $a, n$ are integers $>1$, $p$ is a rational prime, and $n$ is the least integer such that $p|\Phi_n(a)$, then $p \equiv 1 \pmod{n}$.
%\end{coro}

%\begin{lemma} If $K$ is a Galois number field, if $n >1$, if $a \in \O_K \setminus 0$ is not a root of unity, 
%if $\p$ is a prime ideal dividing $ (\Phi_n(a))$,
%and if $\gcd(n, \Nm(\p) -1) =1$, then $a \equiv 1 \pmod{\p}$.
%\end{lemma}
%\begin{proof}
%Since $\p| (\Phi_n(a))$, the ideal $\p $ also divides $ (a^n -1)$ and $e_{\p}(a) |n$.  
%The order $e_{\p} (a)$ therefore divides the 
%greatest common divisor of $n$ and $\Nm(\p) -1$, which is $1$.  We conclude that $e_{\p}(a) = 1$ and $a \equiv 1 \pmod{\p}$.
%\end{proof}
%\begin{coro}  If $a, n$ are positive rational integers, if $p$ is a rational prime dividing $\Phi_n(a)$, and if $\gcd(n, p-1) = 1$, then $a \equiv 1 \pmod{p}$.
%\end{coro}

We can embed any Galois number field $K$ into the complex numbers $\C$ via an injection $i: K \xhookrightarrow{} \C$.  
We use $|a|$, where $a$ is neither a rational prime nor a root of unity, to denote the magnitude of the complex number $i(a)$.
Since we always (eventually) consider the entire set 
 of $|\sigma(a)|$, for $\sigma \in \Gal(K/\Q)$,
  our choice of injection does not matter, and we safely use the notation $|a|$.

\subsubsection{The norm of $a^n -1$ and its factors}\label{sub_norm_factors}

\begin{lemma}\label{upper_norm_bound} 
If $K$ is a Galois number field and $a$ is an element of $K$ such that $|\sigma(a)| \geq 1$ for all $\sigma \in \Gal(K/\Q)$, then
\[ \max ( |\Nm(a^n -1)|, |\Nm(a^n +1)| ) \leq 2^{[K:\Q]} |\Nm(a)|^n.\]
\end{lemma}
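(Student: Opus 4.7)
The plan is to expand $|\Nm(a^n \pm 1)|$ as a product over the Galois group and then bound each factor by a direct triangle-inequality estimate that exploits the hypothesis $|\sigma(a)| \geq 1$.

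First I would write, using the fact that $K/\Q$ is Galois and the norm is the product of conjugates,
\[
|\Nm(a^n \pm 1)| \;=\; \prod_{\sigma \in \Gal(K/\Q)} |\sigma(a^n \pm 1)| \;=\; \prod_{\sigma \in \Gal(K/\Q)} |\sigma(a)^n \pm 1|,
\]
using that $\sigma$ commutes with integer powers and fixes $\pm 1$. The absolute value here is the one inherited from the fixed embedding $i : K \hookrightarrow \C$ (equivalently, the complex absolute value applied to each conjugate); as the paper notes, running $\sigma$ over the whole Galois group makes the choice of embedding immaterial.

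Next I would bound each factor. By the triangle inequality, $|\sigma(a)^n \pm 1| \leq |\sigma(a)|^n + 1$. Because the hypothesis gives $|\sigma(a)| \geq 1$ for every $\sigma$, we have $|\sigma(a)|^n \geq 1$, so $|\sigma(a)|^n + 1 \leq 2 |\sigma(a)|^n$. Thus each factor is at most $2 |\sigma(a)|^n$.

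Multiplying these $[K:\Q]$ bounds together yields
\[
|\Nm(a^n \pm 1)| \;\leq\; \prod_{\sigma} 2\, |\sigma(a)|^n \;=\; 2^{[K:\Q]} \Bigl(\prod_{\sigma}|\sigma(a)|\Bigr)^{n} \;=\; 2^{[K:\Q]} |\Nm(a)|^n,
\]
which is the desired inequality for both signs, hence for the max. The argument is essentially mechanical; the only subtlety worth flagging is ensuring the hypothesis $|\sigma(a)| \geq 1$ is actually used, which it is precisely to pass from $|\sigma(a)|^n + 1$ to $2|\sigma(a)|^n$. I do not anticipate a real obstacle here; the lemma is a bookkeeping generalization of the familiar $\Z$-statement $|a^n \pm 1| \leq 2|a|^n$ for $|a| \geq 1$.
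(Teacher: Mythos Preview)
Your proof is correct, and it is a cleaner argument than the one in the paper. The paper expands the product $\prod_{\sigma}(\sigma(a)^n - 1)$ fully into a sum over all subsets $S \subset \Gal(K/\Q)$, namely $\sum_{S} (-1)^{|\Gal(K/\Q)\setminus S|}\prod_{\sigma\in S}\sigma(a)^n$, applies the triangle inequality to that sum, and then uses the hypothesis $|\sigma(a)|\ge 1$ to bound each partial product $\prod_{\sigma\in S}|\sigma(a)|^n$ by the full product $|\Nm(a)|^n$, giving $2^{[K:\Q]}$ equal summands. You instead bound each factor $|\sigma(a)^n\pm 1|\le 2|\sigma(a)|^n$ directly and multiply. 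Both routes use the hypothesis in the same place (to absorb the additive $+1$), and both yield the identical constant $2^{[K:\Q]}$; your factor-by-factor estimate simply avoids the detour through the $2^{[K:\Q]}$-term subset expansion and is the more natural generalization of the one-variable inequality $|a^n\pm 1|\le 2|a|^n$.
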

\begin{proof}  We follow Nick Ramsey's more general proof of Lemma 2.3.9 in \cite{thesis}.  
By definition, 
\[|\Nm(a^n -1)| = \left | \prod_{\sigma \in \Gal(K/\Q)} \sigma(a^n -1) \right | = \left | \prod_{\sigma \in \Gal(K/\Q)} (\sigma(a)^n -1) \right |.\]
We can rewrite this as a sum over all subsets $S \subset \Gal(K/ \Q)$, 
\begin{equation}\label{norms} \left | \sum_{S \subset \Gal(K/\Q)} (-1)^{|\Gal(K/\Q) \setminus S|} \prod_{\sigma \in S} \sigma (a^n) \right |
\leq \sum_{S \subset \Gal(K/\Q)}  \prod_{\sigma \in S} \left | \sigma(a^n)\right |.
\end{equation}
The right hand side is also an upper bound for $|\Nm(a^n +1)| = \left | \sum_{S \subset \Gal(K/\Q)}  \prod_{\sigma \in S} \sigma (a^n) \right |$.  
As $|\sigma(a)| \geq 1$ for all $\sigma \in \Gal(K/\Q)$, 
\[ \prod_{\sigma \in S} \left | \sigma(a^n)\right | \leq \prod_{\sigma \in \Gal(K/\Q)} \left | \sigma(a^n)\right | = |\Nm(a)|,\]
so we bound the right side of equation \ref{norms} above  by $\sum_{S \subset \Gal(K/\Q)} |\Nm(a)| = 2^{[K:\Q]} |\Nm(a)|$.  
\end{proof}

The next three bounds are analogues of Lemma 2.4 in \cite{Graves}.

\begin{lemma}\label{first_bound_on_D}  If $K$ is a Galois number field, if $\varepsilon >0$, if $a \in \O_K \setminus \{0\}$ is not a root of unity, and if the $abc$ conjecture holds in $K$,  
then \[\Nm(D_{n,a}) \ll_{\epsilon, K,a}  |\Nm(a^n -1)|^{\epsilon}.\]
%where $c_0(a, K, \epsilon) = (\kappa(\epsilon/2, K))^{2[K:\Q]} 
 %\left (  \prod_{\p | (a)} \Nm(\p) \right )^{2+\epsilon}$, with $\kappa(\epsilon, K)$ the constant in the statement of the $abc$ conjecture for number fields.
\end{lemma}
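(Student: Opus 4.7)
The plan is to apply the $abc$ conjecture for $K$ in the form of equation (\ref{Gal_implication}) to the pair $\alpha = a^n$, $\beta = -(a^n-1)$, so that $\alpha + \beta = 1$ is (trivially) a root of unity. This yields
\[
\max\bigl(|\Nm(a^n)|, |\Nm(a^n-1)|\bigr) \ll_{\epsilon', K} \bigl(\Nm(\text{rad}((a^n))) \cdot \Nm(\text{rad}((a^n-1)))\bigr)^{1+\epsilon'}.
\]
In particular, $|\Nm(a^n-1)|$ is bounded above by the same right-hand side.

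The next step is to extract $\Nm(D_{n,a})$ from the radical on the right. Since $\text{rad}((a^n)) = \text{rad}((a))$, this factor contributes only a constant depending on $a$ and $K$, which may be absorbed into the implied constant. For the more interesting factor, I use the factorization $(a^n-1) = C_{n,a} D_{n,a}$ with $C_{n,a}$ and $D_{n,a}$ coprime. Because $C_{n,a}$ is square-free we have $\text{rad}(C_{n,a}) = C_{n,a}$, and because $D_{n,a}$ is powerful every prime dividing $D_{n,a}$ appears to exponent at least $2$, giving $\Nm(\text{rad}(D_{n,a})) \leq \Nm(D_{n,a})^{1/2}$. Multiplying these contributions,
\[
\Nm(\text{rad}((a^n-1))) \leq \Nm(C_{n,a}) \cdot \Nm(D_{n,a})^{1/2}.
\]

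Plugging this into the $abc$ bound and using $|\Nm(a^n-1)| = \Nm(C_{n,a}) \Nm(D_{n,a})$, one obtains
\[
\Nm(C_{n,a}) \Nm(D_{n,a}) \ll_{\epsilon', K, a} \Nm(C_{n,a})^{1+\epsilon'} \Nm(D_{n,a})^{(1+\epsilon')/2}.
\]
Dividing both sides by $\Nm(C_{n,a}) \Nm(D_{n,a})^{(1+\epsilon')/2}$ and using $\Nm(C_{n,a}) \leq |\Nm(a^n-1)|$ yields
\[
\Nm(D_{n,a})^{(1-\epsilon')/2} \ll_{\epsilon', K, a} |\Nm(a^n-1)|^{\epsilon'}.
\]
Finally, given $\epsilon > 0$, choose $\epsilon'$ sufficiently small so that $2\epsilon'/(1-\epsilon') < \epsilon$, and raise both sides to the power $2/(1-\epsilon')$ to conclude.

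There is no serious obstacle: the whole argument hinges on one structural observation, namely that the powerful part $D_{n,a}$ has a much smaller radical than itself ($\Nm(\text{rad}(D_{n,a})) \leq \Nm(D_{n,a})^{1/2}$), so that the $abc$ bound forces $D_{n,a}$ to be small. The only point demanding minor care is confirming that the infinite-place contributions to the height $H$ in Conjecture \ref{nf_abc_conj} are dominated by $|\Nm(a^n-1)|$ up to a constant depending only on $K$, which is why the implication (\ref{Gal_implication}) was recorded in the form it was.
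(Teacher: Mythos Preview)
Your proof is correct and follows essentially the same approach as the paper: apply the $abc$ conjecture to the triple $(a^n, -(a^n-1), 1)$ (the paper writes it as $\alpha = a^n-1$, $\beta = -a^n$, which is the same pair), use $\Nm(\text{rad}(D_{n,a})) \leq \Nm(D_{n,a})^{1/2}$ to extract a saving from the powerful part, absorb the factor $\Nm(\text{rad}((a)))$ into the implied constant, and rearrange. Your $\epsilon$-bookkeeping differs only cosmetically from the paper's.
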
 
                                                                                                                                                                                                                                                                                                                                                                                                                                                                                                                                                                                                                                                                                                                                                                                                                                                                                                                                                                                                                                                                                                                                                                                                                                                                                                                                                                                                                                                                                                                                                                                                                                                                                                                                                                                                                                                                                                                                                                                                                                                                                                                                        
\begin{proof} Setting  $\alpha = a^n -1$ and $\beta = -a^n$ in Conjecture \ref{nf_abc_conj} implies

\begin{align*} |\Nm(a^n -1)| &
\ll_{\epsilon, K}
 \left (  \prod_{\p | (a)} \Nm(\p) \right )^{1+\epsilon /2}    \left (\prod_{\p | (a^n -1)} \Nm(\p)   \right )^{1 + \epsilon /2}.\\
\intertext{The powerful part of $(a^n-1)$ is $D_{n,a}$, so
$\text{rad}(D_{n,a}) \leq D_{n,a}^{1/2}$.  The substitution $(a^n -1) = C_{n,a} D_{n,a}$ yields}
\Nm(C_{n,a}) \Nm(D_{n,a}) 
& \ll_{\epsilon, K} 
 \left (  \prod_{\p | (a)} \Nm(\p) \right )^{1+\epsilon /2}  \Nm(C_{n,a})^{1 + \epsilon /2} \Nm(D_{n,a})^{\frac{1 + \epsilon /2}{2}},\\
 \intertext{so}
 \Nm(D_n(a))^{\frac{1}{2}} & \ll_{\epsilon, K}
 \left (  \prod_{\p | (a)} \Nm(\p) \right )^{1+\epsilon/2}  \Nm(C_{n,a} D_{n,a})^{\epsilon /2}\\
 \intertext{and}
 \Nm(D_n(a)) & \ll_{\epsilon, K}
 \left (   \prod_{\p | (a)} \Nm(\p) \right )^{2+\epsilon} |\Nm(a^n -1)|^{\epsilon}.
 \end{align*}
 We absorb $ \left (   \prod_{\p | (a)} \Nm(\p) \right )^{2+\epsilon}$ into the $\ll_{\epsilon, K}$ and use $\ll_{\epsilon, K,a}$,
 finishing the proof.
 \end{proof}
 Applying Lemma \ref{upper_norm_bound} gives the following corollary.
 \begin{coro}\label{second_bound_on_D} If $K$ is a Galois number field, if $\epsilon >0$, if $a \in \O_K \setminus \{0\}$ is not a root of unity, and if the $abc$ conjecture holds in $K$,  
then \[\Nm(D_{n,a}) \ll_{\epsilon, K,a}  |\Nm(a)|^{n\epsilon}.\]
\end{coro}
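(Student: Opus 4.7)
The plan is to chain Lemma \ref{first_bound_on_D} with Lemma \ref{upper_norm_bound} by direct substitution. Starting from $\Nm(D_{n,a}) \ll_{\epsilon, K, a} |\Nm(a^n-1)|^{\epsilon}$, which is the conclusion of Lemma \ref{first_bound_on_D}, I would bound the right-hand side using Lemma \ref{upper_norm_bound}, whose conclusion is $|\Nm(a^n-1)| \leq 2^{[K:\Q]} |\Nm(a)|^n$. Substituting and pulling the constant factor outside the $\epsilon$-th power gives
\[
\Nm(D_{n,a}) \ll_{\epsilon, K, a} \bigl(2^{[K:\Q]} |\Nm(a)|^n\bigr)^{\epsilon} = 2^{[K:\Q]\epsilon}\, |\Nm(a)|^{n\epsilon}.
\]
The factor $2^{[K:\Q]\epsilon}$ depends only on $K$ and $\epsilon$, so I would absorb it into the $\ll_{\epsilon, K, a}$ implicit constant to obtain the stated bound.

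The only point that warrants care is the hypothesis of Lemma \ref{upper_norm_bound}, which requires $|\sigma(a)| \geq 1$ for every $\sigma \in \Gal(K/\Q)$. The Corollary does not spell this out, so I read it as inherited from the ambient setting in which the Corollary will be applied (in particular, Theorem \ref{strong} assumes the stronger $|\sigma(a)| \geq 2$). If one wanted to eliminate this assumption, the natural fix is to split the Galois orbit of $a$ into conjugates inside and outside the unit disc: the inside conjugates contribute a factor bounded by $2$ each to $|\Nm(a^n-1)|$, so the $|\Nm(a)|^n$ on the right-hand side would be replaced by the Mahler-measure-type product $\prod_{|\sigma(a)| \geq 1} |\sigma(a)|^n$. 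Apart from this bookkeeping, the proof is a one-line substitution and I do not anticipate any genuine obstacle.
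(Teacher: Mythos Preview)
Your approach is exactly the paper's: the proof there consists of the single sentence ``Applying Lemma \ref{upper_norm_bound} gives the following corollary,'' i.e., substitute the bound $|\Nm(a^n-1)| \leq 2^{[K:\Q]}|\Nm(a)|^n$ into Lemma \ref{first_bound_on_D} and absorb the constant. Your observation about the missing hypothesis $|\sigma(a)|\geq 1$ is astute --- the paper does not verify it either, and indeed the Corollary as stated can fail for units of infinite order (where $|\Nm(a)|=1$ but $|\Nm(a^n-1)|$ is unbounded); in the paper's actual applications (Theorem \ref{strong}, Lemma \ref{prime bound}) the stronger condition $|\sigma(a)|\geq 2$ is in force, so no harm results downstream, and your Mahler-measure remark is the right repair in general.
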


 %\begin{coro}\label{1_epsilon}  Suppose that $K$ is a Galois number field, that $a \in \O_K \setminus 0$ is not a root of unity, 
 %and that the $abc$ conjecture for number fields holds in $K$.  Then 
 %\[\Nm(C_{n,a}) \gg_{\epsilon, K,a} |\Nm(a^n -1)|^{1 - \epsilon}.\]
 %%where $C(a, K, \epsilon) =  \kappa(\epsilon/2, K)^{-\frac{4 + \epsilon}{1 + \epsilon}[K:\Q]} \left (\prod_{\p |(a)} \Nm(\p) \right )^{-(2 + \epsilon/2)}$
 %\end{coro}
 %\begin{proof}If we factor $(a^n -1)$ as $C_{n,a}D_{n,a}$ and apply the $abc$ conjecture for number fields, we see that
 %\begin{align*} |\Nm(a^n -1)| &
%\ll_{\epsilon, K}
 %\left (   \prod_{\p | (a)} \Nm(\p) \right )^{1+\epsilon/2}    C_{n,a}^{1 + \epsilon/2}D_{n,a} ^{\frac{1}{2}(1 + \epsilon/2)}\\
 %\intertext{so that}
 %|\Nm(a^n -1)|^{\frac{1}{1 + \epsilon/2}} & 
 %\ll_{\epsilon, K} \left (   \prod_{\p | (a)} \Nm(\p) \right ) C_{n,a} D_{n,a}^{\frac{1}{2}}.\\
 %\intertext{Using $\epsilon /2$, the bound from Lemma \ref{first_bound_on_D} then implies }
 %|\Nm(a^n -1)|^{\frac{1}{1 + \epsilon/2}} &
%\ll_{\epsilon, K}  \left (   \prod_{\p | (a)} \Nm(\p) \right ) C_{n,a}   |\Nm(a^n -1)|^{\epsilon/2})^{\frac{1}{2}}.\\
 %\intertext{We absorb the $a$ into our $\ll_{\epsilon, K}$ and see  that}
 %|\Nm(a^n -1)|^{\frac{1}{1 + \epsilon/2} - \frac{\epsilon}{4}}   
 %& \ll_{\epsilon, K, a} C_{n,a}.
  %\end{align*}
  %As $1 - \epsilon < \frac{1}{1 + \epsilon/2} - \frac{\epsilon}{4}$ and $|\Nm(a^n -1)| >1$, our claim follows.
 %\end{proof}

 \begin{lemma}\label{second_bound_on_C}  If $K$ is a Galois number field, the $abc$ conjecture holds in $K$, $a \in \O_K \setminus \{0\}$ is not a root of unity,
 and $\epsilon >0$, then 
    \[\Nm(C_{n,a}) \gg_{ \epsilon, K, a} |\Nm(a)|^{n(1 -\epsilon)}.\]
 \end{lemma}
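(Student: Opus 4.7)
The plan is to apply the $abc$ conjecture through its consequence (\ref{Gal_implication}) to the identity $a^n + (1-a^n) = 1$, taking $\alpha = a^n$ and $\beta = 1-a^n$; the sum $\alpha + \beta = 1$ is a root of unity, so (\ref{Gal_implication}) applies. This choice extracts $|\Nm(a)|^n$ on the left-hand side, while producing only $\text{rad}((a))$ and $\text{rad}((a^n-1))$ on the right. Specifically, for any $\epsilon_1 > 0$,
\[ |\Nm(a)|^n \ll_{\epsilon_1, K} \left (\Nm(\text{rad}(a^n)) \cdot \Nm(\text{rad}(a^n-1)) \right)^{1+\epsilon_1}. \]
Since $\text{rad}((a^n)) = \text{rad}((a))$ depends only on $a$, I would absorb its norm into the implicit constant to convert $\ll_{\epsilon_1, K}$ into $\ll_{\epsilon_1, K, a}$.

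Next I would unpack the radical of $(a^n-1)$. Because $C_{n,a}$ and $D_{n,a}$ are coprime and $C_{n,a}$ is itself squarefree, $\text{rad}((a^n-1)) = C_{n,a} \cdot \text{rad}(D_{n,a})$. Since every prime of the powerful part $D_{n,a}$ appears with multiplicity at least two, $\Nm(\text{rad}(D_{n,a})) \leq \Nm(D_{n,a})^{1/2}$. Applying Corollary \ref{second_bound_on_D} with a small parameter $\epsilon_2 > 0$ then gives $\Nm(D_{n,a})^{1/2} \ll_{\epsilon_2, K, a} |\Nm(a)|^{n \epsilon_2 / 2}$, so that
\[ \Nm(\text{rad}(a^n-1)) \ll_{\epsilon_2, K, a} \Nm(C_{n,a}) \cdot |\Nm(a)|^{n \epsilon_2 / 2}. \]

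Combining the two steps yields $|\Nm(a)|^n \ll \Nm(C_{n,a})^{1+\epsilon_1} \cdot |\Nm(a)|^{n \epsilon_2 (1+\epsilon_1)/2}$, and solving produces $\Nm(C_{n,a}) \gg |\Nm(a)|^{n \eta}$ with $\eta = (1 - \epsilon_2(1+\epsilon_1)/2)/(1+\epsilon_1)$. Given any target exponent $1-\epsilon$, I would pick $\epsilon_1, \epsilon_2$ small enough that $\eta \geq 1 - \epsilon$. The main obstacle is nothing deeper than this careful juggling of the two $\epsilon$ parameters; all the substantive content is packaged into the $abc$ conjecture and the already-established Corollary \ref{second_bound_on_D}. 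One edge case worth noting is $|\Nm(a)| = 1$ (for instance when $a$ is a unit of infinite order): here $C_{n,a}$ is a nonzero integral ideal, so $\Nm(C_{n,a}) \geq 1 = |\Nm(a)|^{n(1-\epsilon)}$ and the lemma holds trivially.
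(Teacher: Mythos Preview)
Your argument is correct and follows essentially the same route as the paper: both apply the $abc$ conjecture to the triple $\{a^n,\,a^n-1,\,1\}$, absorb $\Nm(\text{rad}(a))$ into the implied constant, split $\text{rad}(a^n-1)=C_{n,a}\cdot\text{rad}(D_{n,a})$, bound $\Nm(\text{rad}(D_{n,a}))\le\Nm(D_{n,a})^{1/2}$, and invoke Corollary~\ref{second_bound_on_D}. The only cosmetic difference is that you track two separate parameters $\epsilon_1,\epsilon_2$ where the paper uses a single $\epsilon/2$ throughout, and you add the (harmless) remark about the case $|\Nm(a)|=1$.
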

 
\begin{proof} Since $(a^n - 1) + 1 = a^n$, the $abc$ conjecture for $K$ tells us 
\begin{align*}
|\Nm(a)|^n & \ll_{\epsilon, K}
\left (   
\prod_{\p | ( a ^n ) }  \Nm(\p)  \prod_{\p | ( a ^n -1 ) }  \Nm(\p) 
\right )^{1 + \epsilon /2},\\
\intertext{so}
|\Nm(a)|^{n} & \ll_{\epsilon, K, a}
\left (   \prod_{\p | C_{n,a}}  \Nm(\p)  \prod_{\p | D_{n,a}}  \Nm(\p) \right )^{1 + \epsilon /2}.\\
\intertext{Recalling $\text{rad}(D_{n,a}) = \prod_{\p | D_{n,a}}  \Nm(\p)  \leq \Nm(D_{n,a})^{1/2}$ lets us rewrite %the above as}
as}
|\Nm(a)|^{\frac{n}{1 + \epsilon/2}} & \ll_{\epsilon, K, a}
\Nm(C_{n,a}) \Nm(D_{n,a})^{1/2}.\\
\intertext{We apply Corollary \ref{second_bound_on_D} and use $\epsilon$, rather than $\epsilon /2$, to see}
%|\Nm(a)|^{\frac{n}{1 + \epsilon/2}} & \ll_{\epsilon, K, a}
 %\Nm(C_{n,a}) |\Nm(a^n -1)|^{\frac{ \epsilon}{2}}.\\
 %\intertext{Applying Lemma \ref{upper_norm_bound} then shows}
 |\Nm(a)|^{\frac{n}{1 + \epsilon/2}} & \ll_{\epsilon, K, a}
 \Nm(C_{n,a}) |\Nm(a)|^{\frac{n\epsilon}{2}}.\\
\intertext{Since $\frac{1}{1 + \epsilon /2} - \epsilon/2 > 1 - \epsilon$, we conclude}
\Nm(C_{n,a}) & \gg_{\epsilon, K, a} |\Nm(a)|^{n(1 - \epsilon)}.
\end{align*}
\end{proof}

\begin{coro} \label{new_prime}
If $K$ is a Galois number field, if $a \in \O_K \setminus \{0\}$ is not a root of unity, if $k \in \Z^+$ is fixed, and if
  $q$ is a large rational prime,
then there exists a prime $\p$ such that $\p | C_{kq,a}$, but $\p \nmid C_{km,a}$ for any $m < q$.
\end{coro}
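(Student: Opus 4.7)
The approach is proof by contradiction: suppose every prime $\p \mid C_{kq,a}$ also divides $C_{km,a}$ for some $1 \leq m < q$, and show that $C_{kq,a}$ must then be too small to be compatible with the lower bound from Lemma \ref{second_bound_on_C}.

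The reduction is an order-of-$a$ argument. If $\p \mid C_{km,a}$ then $\p \mid (a^{km}-1)$, so $e_\p(a) \mid km$; similarly $\p \mid C_{kq,a}$ forces $e_\p(a) \mid kq$. Hence $e_\p(a) \mid \gcd(km,kq) = k\cdot\gcd(m,q)$. Since $q$ is prime and $1 \leq m < q$, $\gcd(m,q) = 1$, so $e_\p(a) \mid k$ and therefore $\p \mid (a^k-1)$. Under the contradiction hypothesis, every prime of $C_{kq,a}$ then divides $(a^k-1)$; combined with the square-freeness of $C_{kq,a}$, this yields the ideal containment $C_{kq,a} \mid \text{rad}(a^k-1) \mid (a^k-1)$, so
\[
\Nm(C_{kq,a}) \leq |\Nm(a^k-1)| \leq 2^{[K:\Q]}|\Nm(a)|^k
\]
by Lemma \ref{upper_norm_bound}.

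I would then pit this bound against Lemma \ref{second_bound_on_C}, which supplies $\Nm(C_{kq,a}) \gg_{\epsilon,K,a} |\Nm(a)|^{kq(1-\epsilon)}$. Provided $|\Nm(a)| > 1$, fixing any $\epsilon \in (0,1)$ makes the right-hand side grow without bound in $q$ while the upper bound is independent of $q$ (for fixed $k$ and $a$), so every sufficiently large prime $q$ yields the desired contradiction and hence produces a prime $\p \mid C_{kq,a}$ with $\p \nmid C_{km,a}$ for all $1\leq m < q$.

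The main obstacle I anticipate is the case $|\Nm(a)|=1$ (i.e., $a$ is a unit of infinite order), where both estimates collapse and Lemma \ref{upper_norm_bound} does not even directly apply since some $|\sigma(a)| < 1$. The natural remedy is to replace $|\Nm(a)|$ throughout by the Mahler measure $M(a) = \prod_\sigma \max(1, |\sigma(a)|)$, which satisfies $M(a)>1$ by Kronecker's theorem since $a$ is neither zero nor a root of unity. The companion upper bound $|\Nm(a^k-1)| \leq 2^{[K:\Q]} M(a)^k$ is automatic from the trivial pointwise estimate $|\sigma(a)^k - 1| \leq 2\max(1,|\sigma(a)|^k)$, but one would need to strengthen Lemma \ref{second_bound_on_C} to $\Nm(C_{kq,a}) \gg M(a)^{kq(1-\epsilon)}$ by invoking Conjecture \ref{nf_abc_conj} in its full height form, rather than through the norm-only consequence (\ref{Gal_implication}).
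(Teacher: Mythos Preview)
Your proof is correct and follows essentially the same route as the paper: assume for contradiction that every prime of $C_{kq,a}$ already divides some $C_{km,a}$ with $m<q$, deduce (you via the order $e_\p(a)$, the paper via the equivalent ideal-sum/polynomial-gcd formulation) that the squarefree ideal $C_{kq,a}$ divides $(a^k-1)$, and contradict the lower bound from Lemma~\ref{second_bound_on_C}. Your remark on the unit case $|\Nm(a)|=1$ is well taken---the paper's proof simply asserts $|\Nm(a)|\geq 2$ without comment and so shares the same gap, which your Mahler-measure suggestion would indeed repair.
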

\begin{proof} (Adapted from Lemma 3.2, \cite{Graves}, p 1812)
Since $|\Nm(a)| \geq 2$ and $\Nm(C_{kn,a}) \gg_{\epsilon, K, a} |\Nm(a)|^{kn(1 - \epsilon)}$, 
Lemma \ref{upper_norm_bound} implies 
$\Nm(C_{kn,a}) > 1 + |\Nm(a^k-1)|$ when $n$ is sufficiently large. 
Let us fix $q$ and suppose, leading to contraction, that if $\p | C_{kq,a}$, then $\p | C_{km,a}$ for some $m <q$.
Then, as $C_{kq,a}$ is square-free, 
\begin{align*} C_{kq,a} &| \prod_{m =1}^{q-1} ( C_{kq,a} + C_{km,a}).\\
\intertext{%where $C_{q,a} + C_{m,a}$ is the smallest ideal containing both $C_{q,a}$ and $C_{m,a}$.
The ideals $C_{kq,a}$ and $C_{km,a}$ respectively contain $a^{kq} -1$ and $a^{km} -1$, so the ideal
$(a^{kq} -1 ) + ( a^{km} -1 )$ is contained in the ideal $ C_{kq,a} + C_{km,a}$.  
The integers $q$ and $m$ are relatively prime, so the
 greatest common divisor of the polynomials $x^{km} -1 $ and $x^{kq} -1$ is $x^{\gcd(km,kq)} - 1 = x^k-1$.
 Therefore the ideal}
 ( a^k-1 ) & \subset ( a^{kq} -1 ) + ( a^{km} -1 ) \subset (C_{kq,a} + C_{km,a}) \\
 \intertext{and}
  C_{kq,a} &| \prod_{m =1}^{q-1} ( a^k-1 ).
  \end{align*}
The ideal $C_{kq,a}$ is radical, so $C_{kq,a}$ divides $( a^k - 1 )$ and $\Nm(C_{kq,a}) \leq |\Nm(a^k-1)|$, a contradiction.
 We conclude there must be a prime $\p$ that divides $C_{kq,a}$ but does not divide any $C_{km,a}$ for $m < q$.
  \end{proof}

\section{First Growth Result on non-Wieferich places}\label{Section:first_results}

%We can now prove Theorem \ref{basic}.

\begin{proof} (of Theorem \ref{basic})
Suppose $q$ is a large rational prime.
Applying Lemma \ref{upper_norm_bound} shows $\Nm(C_{kq,a})\leq | \Nm(a^{kq} -1)| \leq 2^{[K:\Q]}|\Nm(a)|^{kq}$.  
Corollary \ref{new_prime} states%implies that if $q$ is a large rational prime, 
there exists a prime ideal $\p$ such that 
$\p | C_{kq,a}$, but $\p \nmid C_{km, a}$ for all $m <q$.
Together, Lemmas \ref{heart} and \ref{p_equiv_1} show that each of these primes $\p$ is a non-Wieferich place base $a$ 
where $\Nm(\p) \equiv 1 \pmod{k}$.

This all adds up to  
\[ \left  | \left \{
\begin{array}{c}
\text{non-Wieferich places }  \\
\p \text{ base }a
\end{array} :
\begin{array}{c}
\Nm(\p) \leq 2^{[K:\Q]} |\Nm(a)|^{kn},\\
 \Nm(\p) \equiv 1 \pmod{k}
 \end{array}  \right  \} \right | \gg_{\epsilon, K, a,k} \frac{n}{\log n}.\]
 Setting $x = 2^{[K:\Q]} |\Nm(a)|^{kn}$ demonstrates that
 \[ \left  | \left \{
\begin{array}{c}
\text{non-Wieferich places }  \\
\p \text{ base }a
\end{array} :
\Nm(\p) \leq x, \Nm(\p) \equiv 1 \pmod{k} \right \} \right | \gg_{\epsilon, K, a,k} \frac{\log x}{\log \log x}.\]
 \end{proof}

\section{Norms of Cyclotomic Polynomials in Number Fields} 

The proof of Theorem \ref{basic} relies on growth results in Section \ref{sub_norm_factors} for $a^n -1$ and its factors. 
Section \ref{sub_norm_factors} and Section \ref{Section:first_results} apply to all $a \in \O_K \setminus \{0\}$ that are not roots of unity.

%This adaption of Thangadurai and Vatwani's bound is different from all of the other bounds presented, in that it 
%only applies to certain $a$.  
%This is why we have multiple results in Section \ref{Section:mainresults}.
%We get very different bounds when we use Schinzel's work, which applies to all 
%$a \in O_K \setminus 0$ that are not roots of unity, and when we use Thangadurai and Vatwani's stronger, limited bound.
  
The lemma below generalizes a bound of Thangadurai and Vatwani (Theorem 5 in \cite{TV}).
It works for a smaller selection of $a$ ---
the set of $a \in \O_K$ such that $|\sigma(a)| \geq 2$ for all $\sigma \in \Gal(K/\Q)$.
As Theorem \ref{iqf}'s proof shows, this subset is not much (if any) smaller than $\O_K \setminus \{0\}$ when $K$ is imaginary quadratic.
The following lemma shows that most elements of Galois number fields satisfy the assumptions of the subsequent result, Lemma \ref{lower_phi_bound}. 
For that later lemma,we denote the Euler totient function by $\phi$ and  the M\"{o}bius function by $\mu$. 
%If $K$ is a real field of degree $n$, $\O_K$ is an $n$-dimensional vector space
%and we can view the elements of $\O_K$ as an integer $n$-tuple.  
%If we fix $n-1$ of the coordinates, we only have finitely many options for the $n$th coordinate to make
%the absolute value of our integer $<2$.  This means that, in practice, we can almost always use the next lemma.
%Note that the selection is not necessarily much smaller; in the Gaussian integers, for example, 
%the complement of the set is $\{0, \pm 1, \pm i, \pm 1 \pm i \}$.  
%This means that the only elements of the Gaussians where we would want to use Theorem \ref{first_result} are $\pm 1 \pm i$.  
%In fact, if $d >2$ and $d \equiv 1 \pmod{4}$, then $|\sigma(a)| \geq 2$ for all $a \in \O_K \setminus \{ 0, \pm 1, \pm i\}$. 

\begin{lemma}
Let $K$ be a Galois number field of degree $n$ over $\Q$ with basis $\{x_1, \dots, x_n\}$, so that 
$K = \Q(x_1, \ldots, x_n)$ and $\O_K = \Z[x_1, \ldots, x_n]$.
Given any integer $n-1$ tuple $(z_1, \ldots, z_{n-1}) \in \Z^{n-1}$, there are only finitely many $z_n \in \Z$ such that 
$\left | \sum_{i=1}^n z_i \sigma (x_i) \right | <2$ for some $\sigma \in \Gal(K/\Q)$.
\end{lemma}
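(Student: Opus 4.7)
The plan is to fix the tuple $(z_1,\dots,z_{n-1})$ and then, one automorphism at a time, use the reverse triangle inequality to bound $|z_n|$. The statement is really a finiteness statement for each $\sigma$, together with the observation that the Galois group is finite, so the full set is a finite union of finite sets.

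In detail, I would proceed as follows. Fix $(z_1,\dots,z_{n-1})\in\Z^{n-1}$ and fix $\sigma\in\Gal(K/\Q)$. Define the complex number
\[
A_\sigma := \sum_{i=1}^{n-1} z_i\,\sigma(x_i),
\]
which depends only on $\sigma$ and the fixed tuple, not on $z_n$. The condition $|\sum_{i=1}^n z_i\sigma(x_i)|<2$ then reads $|z_n\sigma(x_n)+A_\sigma|<2$. The key point is that $\sigma(x_n)\neq 0$: since $\{x_1,\dots,x_n\}$ is a $\Q$-basis of $K$, we have $x_n\neq 0$, and any field automorphism sends nonzero elements to nonzero elements. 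Hence $|\sigma(x_n)|>0$.

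By the reverse triangle inequality,
\[
|z_n\sigma(x_n)+A_\sigma|\;\geq\; |z_n|\,|\sigma(x_n)|-|A_\sigma|,
\]
so any $z_n\in\Z$ satisfying the displayed inequality must obey
\[
|z_n|\;<\;\frac{2+|A_\sigma|}{|\sigma(x_n)|}.
\]
This bounds $z_n$ inside an explicit finite interval, so only finitely many integers $z_n$ work for this particular $\sigma$.

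Finally, the set of $z_n$ for which the inequality holds for \emph{some} $\sigma\in\Gal(K/\Q)$ is the union over $\sigma$ of the finite sets produced above. Since $\Gal(K/\Q)$ has order $n<\infty$, this union is itself finite, which is what we wanted. I do not anticipate any real obstacle: the only substantive observation is that $\sigma(x_n)\neq 0$ (otherwise the coefficient of $z_n$ could vanish for some $\sigma$ and the inequality would become a condition independent of $z_n$), and that point is automatic from $x_n$ being part of a $\Q$-basis.
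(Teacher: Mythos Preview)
Your proof is correct and follows essentially the same approach as the paper: bound $|z_n|$ for each fixed $\sigma$ via a triangle-type inequality, then take the finite union over $\sigma\in\Gal(K/\Q)$. In fact your use of the \emph{reverse} triangle inequality (yielding $|z_n|<(2+|A_\sigma|)/|\sigma(x_n)|$) is the logically correct direction---the paper's displayed triangle inequality $|\sum|\le |A_\sigma|+|z_n\sigma(x_n)|$ does not by itself force $|A_\sigma|+|z_n\sigma(x_n)|<2$---and your explicit remark that $\sigma(x_n)\neq 0$ fills in a point the paper leaves implicit.
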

\begin{proof} Fix $\sigma \in \Gal(K/\Q)$.
By the triangle inequality,
\begin{align*}
\left | \sum_{i=1}^n z_i \sigma (x_i) \right | &\leq \left | \sum_{i=1}^{n-1} z_i \sigma (x_i) \right | + |z_n \sigma (x_n)|.\\
\intertext{If}
\left | \sum_{i=1}^{n-1} z_i \sigma (x_i) \right | + |z_n \sigma (x_n)| & <2,\\
\intertext{then} 
|z_n| &\leq \frac{2 - \left | \sum_{i=1}^{n-1} z_i \sigma (x_i) \right |}{|\sigma(x_n)|}.
\end{align*}
There are only finitely many values of $z_n$ satisfying this inequality, and there are only $n$ elements of $\Gal(K/\Q)$, so our claim holds.
\end{proof}

\begin{lemma}\label{lower_phi_bound}  
Suppose that $K$ is a Galois number field and that $a \in K$ satisfies $|\sigma(a)| \geq 2$ for all $\sigma \in \Gal(K/\Q)$.  Then for all $n\geq 2$,
\[| \Nm(a)| ^{\phi(n)} \leq 2^{[K:\Q]}| \Nm(\Phi_n(a) ) |.\]
\end{lemma}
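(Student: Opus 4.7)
The plan is to reduce to a per-embedding inequality and multiply across embeddings. Writing $b = \sigma(a)$ for each $\sigma \in \Gal(K/\Q)$, it suffices to prove the pointwise bound $|b|^{\phi(n)} \leq 2 |\Phi_n(b)|$ whenever $|b| \geq 2$ and $n \geq 2$; taking the product over the $[K:\Q]$ embeddings then gives $|\Nm(a)|^{\phi(n)} \leq 2^{[K:\Q]} |\Nm(\Phi_n(a))|$, which is exactly the claimed inequality.

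For the per-embedding bound, I would exploit the palindromic symmetry of cyclotomic polynomials. For $n \geq 2$, $\Phi_n$ is self-reciprocal, so $\Phi_n(b) = b^{\phi(n)} \Phi_n(1/b)$. Dividing both sides by $|b|^{\phi(n)}$, the target inequality becomes $|\Phi_n(1/b)| \geq 1/2$. Since $|1/b| \leq 1/2$ under the hypothesis, it suffices to establish the uniform claim: for every $c \in \C$ with $|c| \leq 1/2$ and every integer $n \geq 2$, one has $|\Phi_n(c)| \geq 1/2$.

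To attack this uniform claim, I would apply the M\"obius factorization $\Phi_n(x) = \prod_{d \mid n}(x^d - 1)^{\mu(n/d)}$ and take logarithms to obtain
\[
\log|\Phi_n(c)| \;=\; \sum_{d \mid n} \mu(n/d) \log|1 - c^d|,
\]
then Taylor-expand each $\log|1 - c^d|$ in powers of $c^d$ and exploit the sign alternation contributed by $\mu$. A naive triangle-inequality estimate $|c - \zeta| \geq 1 - |c|$ only yields $|\Phi_n(c)| \geq (1/2)^{\phi(n)}$, which is far too weak once $\phi(n) \geq 2$, so cancellation in the signed sum is the whole point of the argument. The exponential decay $|c^d| \leq 2^{-d}$ will keep the contribution of large $d$ negligible, and I would use this to reduce the estimate to controlling a finite collection of small-$d$ terms.

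The main obstacle is extracting the sharp uniform constant $1/2$ rather than a bound depending on $n$ through $\phi(n)$ or through the number of squarefree divisors $2^{\omega(n)}$. The negative contributions (from $\mu(n/d) = 1$, most notably the $d = 1$ term in the squarefree case $\mu(n) = 1$) must be compensated by the positive contributions from the $\mu(n/d) = -1$ terms; making this cancellation rigorous, especially for squarefree $n$ with several small prime factors, is the most delicate step. This refinement is what generalizes Thangadurai and Vatwani's bound from the real integer setting $a \geq 2$ to arbitrary complex $c$ in the closed disk of radius $1/2$.
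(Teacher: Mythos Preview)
Your plan coincides with the paper's proof through the M\"obius factorization and the per-embedding reduction; both arguments arrive at the task of bounding $\sum_{d\mid n}\mu(n/d)\log\lvert 1-\sigma(a)^{-d}\rvert$ below by $-\log 2$ for each fixed $\sigma$. The only divergence is in how that last bound is obtained. The paper does \emph{not} attempt your complex-disk inequality $\lvert\Phi_n(c)\rvert\ge 1/2$ for $\lvert c\rvert\le 1/2$; instead it applies the triangle inequality $\lvert 1-\sigma(a)^{-d}\rvert\ge 1-\lvert\sigma(a)\rvert^{-d}$ to pass to the \emph{real} parameter $b=\lvert\sigma(a)\rvert\ge 2$, and then invokes Thangadurai and Vatwani's two-sided estimate $-\log 2\le\sum_{d\mid n}\mu(n/d)\log(1-b^{-d})\le\log 2$, observing that their argument, though stated for integers $b\ge 2$, goes through verbatim for real $b\ge 2$. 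That citation is the shortcut you are missing: if the reduction to real $b$ is legitimate, no new cancellation argument is needed and your ``most delicate step'' evaporates.

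That said, your instinct that a genuine complex extension is required has some justification. The paper's reduction replaces each $\log\lvert 1-c^d\rvert$ by its termwise lower bound $\log(1-\lvert c\rvert^d)$ \emph{inside} the signed sum $\sum_{d\mid n}\mu(n/d)(\cdot)$, and that substitution is only inequality-preserving for the terms with $\mu(n/d)=+1$; when $\mu(n/d)=-1$ one would need the opposite bound $\log\lvert 1-c^d\rvert\le\log(1+\lvert c\rvert^d)$. So the paper's displayed inequality at that step is not justified as written. A fully rigorous proof therefore needs either your proposed complex-disk bound (which you identify but do not carry out --- that is the gap in your proposal) or a more careful treatment that splits the M\"obius sum by the sign of $\mu$ and controls the resulting mixed expression in $1\pm\lvert c\rvert^d$.
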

\begin{proof} (Adapted from Theorem 5 in \cite{TV})
Standard properties of the M\"{o}bius and totient functions show that
\begin{align*} 
\left |\Nm(\Phi_n(a))\right | 
 &= \left | \Nm \left (\prod_{d|n} (a^d -1)^{\mu(n/d)} \right ) \right |\\
&= |\Nm(a)^{\sum_{d|n} d\mu(n/d)} | \left | \prod_{d|n} \Nm \left (1 - \frac{1}{a^d} \right ) ^{\mu(n/d)} \right |\\
&=|\Nm(a)|^{\phi(n)} \prod_{d|n} \left |\Nm \left (1 -\frac{1}{a^d} \right ) \right |^{\mu(n/d)}
\end{align*}
and that 
\[\log \left (\prod_{d|n} \left | \Nm \left (1 - \frac{1}{a^d} \right ) \right |^{\mu(n/d)} \right )= 
\sum_{d|n} \mu(n/d) \log \left |\Nm \left (1 - \frac{1}{a^d} \right ) \right |.\]

By definition, 
\begin{align*}
\log \left | \Nm \left ( 1 - \frac{1}{a^d} \right ) \right | 
& =  \log \left ( \prod_{\sigma \in \Gal(K/\Q)} \left | 1 - \frac{1}{\sigma(a)^d} \right | \right )\\
& = \sum_{\sigma \in \Gal(K/\Q)} \log \left | 1 - \frac{1}{\sigma(a)^d} \right |.
\end{align*}
We use the triangle inequality to bound this below by $\displaystyle \sum_{\sigma \in \Gal(K/\Q)} \log \left ( 1 - |\sigma(a)|^{-d} \right ),$
and see that 
\begin{align*}
\sum_{d|n} \mu(n/d) \log \left |\Nm \left (1 - \frac{1}{a^d} \right ) \right | 
& \geq \sum_{d|n} \mu(n/d) \sum_{\sigma \in \Gal(K/\Q)} \log \left ( 1 - |\sigma(a)|^{-d} \right )\\
& = \sum_{\sigma \in \Gal(K/\Q)}\sum_{d|n} \mu(n/d)\log \left ( 1 - |\sigma(a)|^{-d} \right ).
\end{align*}
In their proof, Thangadurai and Vatwani showed that if $n$ and $b$ are integers $\geq 2$, then 
\begin{equation}\label{sandwich}
- \log 2 \leq \sum_{d|n} \mu(n/d) \log (1 - b^{-d}) \leq \log 2.
\end{equation}
The details of their argument, however, never require that $b$ be an integer, so they actually showed that if $n$ is an integer $\geq 2$ and 
if $b$ is a real number $\geq 2$, then equation \ref{sandwich} holds.  This then implies  
\[\sum_{\sigma \in \Gal(K/\Q)}\sum_{d|n} \mu(n/d)\log \left ( 1 - |\sigma(a)|^{-d} \right ) \geq -[K:\Q]\log 2,\]
so 
\[ \left |\Nm(\Phi_n(a))\right |  \geq \frac{|\Nm(a)|^{\phi(n)}}{ 2^{[K:\Q]}}.\]                                     
\end{proof}

We use this to give a lower bound for $\Nm(C_{n,a}')$, which allows us to study primes in arithmetic progressions.

\begin{lemma} \label{prime bound} If $K$ is a  Galois number field, if the $abc$ conjecture holds in $K$,  if $a \in \O_K$, and if
 $\min_{\sigma \in \Gal(K/\Q)} |\sigma(a)| \geq 2$,  then 
  \[\Nm(C'_{n,a}) \gg_{\epsilon, K, a} |\Nm(a)|^{\phi(n) - n\epsilon}.\]
 % where
 % \[c(a,K,\epsilon) = c_1(a,K,\epsilon)/(2^{-[K:\Q]}) = (c_0(a,K,\epsilon))^{-1} 2^{-[K:\Q](1+\epsilon)}.\]
  \end{lemma}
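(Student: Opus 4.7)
The plan is to exploit the factorization of the ideal $(\Phi_n(a))$ according to whether a given prime lies above $C_{n,a}$ or above $D_{n,a}$, and then combine the lower bound on $|\Nm(\Phi_n(a))|$ from Lemma \ref{lower_phi_bound} with the upper bound on $\Nm(D_{n,a})$ from Corollary \ref{second_bound_on_D}.

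First I would unpack what $C'_{n,a}$ looks like prime-by-prime. Since sums of ideals correspond to taking minima of valuations, for every prime $\p$ of $\O_K$ we have $v_\p(C'_{n,a}) = \min\bigl(v_\p(\Phi_n(a)), v_\p(C_{n,a})\bigr)$. Because $C_{n,a}$ and $D_{n,a}$ are coprime and every prime dividing $\Phi_n(a)$ divides $a^n-1 = C_{n,a}\,D_{n,a}$, the primes appearing in $(\Phi_n(a))$ split cleanly into two groups. For primes $\p\mid C_{n,a}$, the valuation $v_\p(C_{n,a})=1$ forces $v_\p(\Phi_n(a)) = v_\p(C'_{n,a}) \in\{0,1\}$, so these primes contribute exactly $C'_{n,a}$ to $(\Phi_n(a))$. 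Let $E$ collect the remaining contribution, i.e. the part of $(\Phi_n(a))$ supported on primes dividing $D_{n,a}$. This gives the factorization $(\Phi_n(a)) = C'_{n,a}\cdot E$, and since $v_\p(E) = v_\p(\Phi_n(a)) \le v_\p(a^n-1) = v_\p(D_{n,a})$ for every $\p$ supporting $E$, we obtain $\Nm(E) \le \Nm(D_{n,a})$.

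Taking norms then yields
\[
\Nm(C'_{n,a}) \;=\; \frac{|\Nm(\Phi_n(a))|}{\Nm(E)} \;\ge\; \frac{|\Nm(\Phi_n(a))|}{\Nm(D_{n,a})}.
\]
The hypothesis $|\sigma(a)|\ge 2$ for every $\sigma\in\Gal(K/\Q)$ lets me invoke Lemma \ref{lower_phi_bound}, giving $|\Nm(\Phi_n(a))| \ge |\Nm(a)|^{\phi(n)}/2^{[K:\Q]}$, while Corollary \ref{second_bound_on_D} (whose hypotheses are satisfied since $a$ is certainly not a root of unity) gives $\Nm(D_{n,a}) \ll_{\epsilon, K, a} |\Nm(a)|^{n\epsilon}$. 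Dividing and absorbing the constant $2^{-[K:\Q]}$ into the implicit $\ll_{\epsilon, K, a}$ constant delivers the claimed bound $\Nm(C'_{n,a}) \gg_{\epsilon,K,a} |\Nm(a)|^{\phi(n) - n\epsilon}$.

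The conceptually delicate step is the first one: identifying that the primes of $(\Phi_n(a))$ that are "lost" when forming $C'_{n,a}$ are precisely those dividing $D_{n,a}$, and that their total contribution to the norm is controlled by $\Nm(D_{n,a})$. Once this structural observation is in place, the estimate is just an assembly of Lemma \ref{lower_phi_bound} and Corollary \ref{second_bound_on_D}, so I do not expect any further obstacle.
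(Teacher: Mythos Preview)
Your argument is correct and is essentially the paper's own proof: your ideal $E$ is exactly the paper's $D'_{n,a} = (\Phi_n(a)) + D_{n,a}$, and the rest is the same combination of Lemma~\ref{lower_phi_bound} with Corollary~\ref{second_bound_on_D}. The only difference is cosmetic---you re-derive the factorization $(\Phi_n(a)) = C'_{n,a}\,D'_{n,a}$ and the inequality $\Nm(D'_{n,a}) \le \Nm(D_{n,a})$ by hand, whereas the paper appeals to its earlier definition.
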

  
\begin{proof}  Definition \ref{factorizations} factors $(\Phi_n(a)) = C_{n,a}' D_{n,a}'$, so by Lemma \ref{lower_phi_bound},
\begin{align*}
\Nm(C_{n,a}') \Nm(D_{n,a}') &= |\Nm(\Phi_n(a))| \geq |\Nm(a)|^{\phi(n)}/2^{[K:\Q]}.\\
\intertext{ We bound $\Nm(D_{n,a}')$ above by $\Nm(D_{n,a})$ and apply Corollary \ref{second_bound_on_D} to see}
 \Nm(C_{n,a}') |\Nm(a)|^{n \epsilon} &\gg_{\epsilon, K, a} |\Nm(a)|^{\phi(n)},
 \end{align*}
and derive $\Nm(C_{n,a}') \gg_{\epsilon, K, a}  |\Nm(a)|^{\phi(n) - n\epsilon} $.
\end{proof}

%This is a very different growth result from Lemma \ref{Rosser}.  That bound is linear in $n$, while this is exponential in $n$.

\section{A Stronger Growth Result on Weiferich Places}\label{Section:mainresults}
%\begin{theorem}\label{mainresult} Suppose that $K$ is a Galois number field, and that $a \in \O_K \setminus 0$ is not a root of unity.   
%Assuming the $abc$ conjecture for $K$, there are infinitely many non-Wieferich places base $a$.
%\end{theorem} 
%\begin{proof}
%Corollary \ref{Rosser} shows $\Nm(C_{n,a}) \gg_{\epsilon, K, a} (n \log n)^{1 - \epsilon}$, so 
%$\lim_{n \rightarrow \infty} \Nm(C_{n,a}) = \infty$.
%The highest degree of any prime dividng $\Nm(C_{n,a})$ is $[K:\Q]$, 
%so $\lim_{n \rightarrow \infty} \text{rad}(\Nm(C_{n,a})) = \infty$. 
%There are therefore infinitely many primes $\p$ that divide $C_{j,a}$ for some $j$.
 %By Lemma \ref{heart}, every one of those primes is a non-Wieferich place base $a$
%\end{proof}

In order to generalize Ding's growth result to algebraic number fields, we need the next three lemmas.

\begin{lemma}  \label{order_exercise}
 If $K$ is a number field, if $a \in \O_K \setminus \{0\}$ is not a root of unity, if $\p$ is an unramified prime ideal of $\O_K$ lying over the rational
prime $p$, and if $p \nmid m$, then $\Phi_m(a) \in \p$ if and only if $e_{\p}(a) =m$.
\end{lemma}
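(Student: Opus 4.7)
The plan is to prove both directions using the factorization $x^m - 1 = \prod_{d \mid m} \Phi_d(x)$ together with the fact that, because $p \nmid m$, the polynomial $x^m-1$ is separable in the residue field $\O_K/\p$. Separability forces the cyclotomic factors $\Phi_d(x)$ for $d \mid m$ to be pairwise coprime modulo $\p$, which is the engine of both implications.

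For the easier direction, assume $e_\p(a) = m$. Then $a^m \equiv 1 \pmod{\p}$, so reducing $x^m - 1 = \prod_{d \mid m} \Phi_d(x)$ modulo $\p$ and evaluating at $a$ shows that $\Phi_d(a) \equiv 0 \pmod{\p}$ for at least one divisor $d$ of $m$. If any such $d$ were a proper divisor, then since $\Phi_d(x) \mid x^d - 1$ we would have $a^d \equiv 1 \pmod{\p}$ with $d < m$, contradicting the minimality of $m = e_\p(a)$. Hence the only possible $d$ is $m$ itself.

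For the harder direction, assume $\Phi_m(a) \in \p$. Since $\Phi_m(x) \mid x^m - 1$, we immediately get $a^m \equiv 1 \pmod{\p}$, so $e_\p(a) \mid m$. Write $d = e_\p(a)$ and suppose toward contradiction that $d < m$. Then $a$ is a root of $x^d - 1 = \prod_{e \mid d} \Phi_e(x)$ modulo $\p$, so some $\Phi_e(a) \in \p$ with $e \mid d$, hence $e \mid m$ and $e < m$. Now $a \bmod \p$ would be a common root of $\Phi_m$ and $\Phi_e$ in $\O_K/\p$, so $x^m - 1$ would have a repeated root in the residue field. But the derivative of $x^m - 1$ is $m x^{m-1}$, and since $p \nmid m$, $m$ is a unit in $\O_K/\p$; thus $\gcd(x^m - 1, m x^{m-1}) = 1$ in $(\O_K/\p)[x]$, contradicting separability. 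Therefore $d = m$.

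The one step needing care is verifying that the cyclotomic factors are pairwise coprime in $(\O_K/\p)[x]$; the cleanest route is precisely the derivative computation above, using $p \nmid m$ to conclude $m \not\equiv 0 \pmod{\p}$. The unramified hypothesis is not actually needed for this argument, since $\O_K/\p$ is a field for every nonzero prime ideal $\p$; it is presumably included for consistency with how the lemma will be invoked later. I expect no other obstacles, as the proof reduces to the classical mod-$p$ separability argument for cyclotomic polynomials, transferred verbatim to the residue field of $\p$.
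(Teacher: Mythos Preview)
Your proof is correct, and it takes a genuinely different route from the paper's for the harder implication. The paper argues at the level of $\p^2$: assuming $e_\p(a)=d<m$, it observes that both $\Phi_m(a)$ and $\Phi_d(a)$ lie in $\p$, hence so do $\Phi_m(a+p)$ and $\Phi_d(a+p)$, so $(a+p)^m-1\equiv a^m-1\pmod{\p^2}$; expanding the binomial gives $mpa^{m-1}\in\p^2$, and unramifiedness ($p\notin\p^2$) together with $p\nmid m$ forces $a\in\p$, contradicting $a^m\equiv 1\pmod\p$. Your argument stays entirely in the residue field and instead uses that $\Phi_m\Phi_e\mid x^m-1$ in $\Z[x]$, so a common root of $\overline{\Phi_m}$ and $\overline{\Phi_e}$ in $\O_K/\p$ would be a double root of $x^m-1$, which the derivative test rules out since $p\nmid m$ makes $m$ a unit in $\O_K/\p$. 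Your approach is shorter, and your remark that it does not use the unramified hypothesis is accurate: the separability argument only needs that $\O_K/\p$ has characteristic $p$ with $p\nmid m$, which holds for any prime $\p$ over $p$. The paper's computation, by contrast, genuinely consumes unramifiedness to conclude $p\notin\p^2$. So you have in fact proved a slightly stronger statement than the one asserted.
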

\begin{proof}
We restate Murty's solution to Exercise 1.5.29 in \cite{murty_book}, adjusted to algebraic integers and prime ideals in $\O_K$.

If $e_{\p}(a) =m$, then $\p$ divides $( a^m -1 ) = \prod_{d|m} ( \Phi_d(a) )$
and one of the $\Phi_d(a)$ must be an element of $\p$.
If $\Phi_d(a) \in \p$, then $\p$ divides $( a^d -1 ) = \prod_{\delta|d} ( \Phi_{\delta}(a))  $, 
and the order of $a$ modulo $\p$ must divide $d$.  We just showed $d|m$ and $m|d$, so $d =m$ and $\Phi_m(a) \in \p$.

Now suppose $\Phi_m(a) \in \p$ and $p \nmid m$, so $a^m \equiv 1 \pmod{\p}$ and $e_{\p}(a) |m$.
Assume, leading to a contradiction, that $e_{\p}(a) = d <m$.
Since $p \in \p$,  
\begin{align*}\Phi_m(a) &\equiv \Phi_m(a +p) \equiv 0 \equiv \Phi_d(a) \equiv \Phi_d(a +p) \mod{\p},\\
\intertext{and therefore $\Phi_m(a)\Phi_d(a) \equiv \Phi_m(a+p) \Phi_d(a + p) \equiv 0 \mod{\p^2}$.
The product $\Phi_m(x) \Phi_d(x)$ divides $x^m -1$, so}
a^m -1 & \equiv (a+p)^m - 1 \equiv a^m + mpa^{m-1} -1 \pmod{\p^2}.
\end{align*}
and $mpa^{m-1} \in \p^2$. 
We assumed $\p$ is unramified, so $p$ is an element of $\p$, but it is not an element of $\p^2$.
This implies $ma^{m-1} \in \p$, and since $p$ does not divide the integer $m$,  the element $a^{m-1}$ is in $\p$.
Primality means $a \in \p$ and therefore $\p \nmid \Phi_{m}(a)$, a contradiction.  
%We conclude that $e_{\p}(a) =m$. 
The result follows.
\end{proof} 
We conclude that if $p \nmid m$ and $\Phi_m(a) \in \p$, then $m$ divides the group order, $\Nm(\p) -1$, and $\Nm(\p) \equiv 1 \pmod{m}$.

\begin{lemma}\label{carrot}
If $K$ is a Galois number field, if $a \in \O_K \setminus \{0\}$  is not a root of unity, if $\p$ is an unramified prime ideal of $\O_K$ lying over the rational
prime $p$, and if $\Phi_{n}(a) \in \p$, then the order of $a$ modulo $\p$ is $n \cdot p^{-v_p(n)}$ and $\Nm(\p) \equiv 1 \pmod{n \cdot p^{-v_p(n)}}.$
\end{lemma}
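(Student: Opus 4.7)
The plan is to reduce everything to the prime-to-$p$ case already handled by Lemma \ref{order_exercise}. Write $n = p^{k} m$ with $k = v_p(n)$ and $\gcd(p, m) = 1$. The target equality $e_{\p}(a) = n \cdot p^{-v_p(n)}$ then becomes $e_{\p}(a) = m$, which is exactly the conclusion of Lemma \ref{order_exercise} provided we can promote the hypothesis $\Phi_{n}(a) \in \p$ to $\Phi_{m}(a) \in \p$. If $k = 0$ there is nothing to do, so assume $k \geq 1$.

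The key step is a classical cyclotomic identity modulo $p$. From the Frobenius relation $\Phi_{m}(x)^{p} \equiv \Phi_{m}(x^{p}) \pmod{p}$ together with the factorization $\Phi_{pm}(x) = \Phi_{m}(x^{p}) / \Phi_{m}(x)$ (valid when $p \nmid m$), one obtains $\Phi_{pm}(x) \equiv \Phi_{m}(x)^{p-1} \pmod{p}$. Iterating this, using $\Phi_{p^{k} m}(x) = \Phi_{pm}(x^{p^{k-1}})$ for $k \geq 1$ and $p \nmid m$, gives the identity
\[
\Phi_{n}(x) \;\equiv\; \Phi_{m}(x)^{\phi(p^{k})} \pmod{p}.
\]
This is the only nontrivial input; it is standard but worth writing out carefully since the analogue of the reduction map from $\Z[x]$ to $\O_K / \p[x]$ is what makes the argument work for algebraic integers.

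Evaluating at $a$ and observing that $p \in \p$ (since $\p$ lies over $p$), we get $\Phi_{n}(a) \equiv \Phi_{m}(a)^{\phi(p^{k})} \pmod{\p}$. Because $\Phi_{n}(a) \in \p$ by hypothesis and $\p$ is prime, $\Phi_{m}(a) \in \p$. Since $p \nmid m$, Lemma \ref{order_exercise} applies and yields $e_{\p}(a) = m = n \cdot p^{-v_p(n)}$, proving the first assertion.

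For the congruence $\Nm(\p) \equiv 1 \pmod{n \cdot p^{-v_p(n)}}$, note that the multiplicative group $(\O_K / \p)^{\times}$ has order $\Nm(\p) - 1$, and $e_{\p}(a)$ is by definition the order of the image of $a$ in this group. Lagrange's theorem then forces $e_{\p}(a) \mid \Nm(\p) - 1$, i.e.\ $n \cdot p^{-v_p(n)} \mid \Nm(\p) - 1$, which is the claimed congruence. The main (mild) obstacle is verifying the cyclotomic identity cleanly; once that is in hand the proof is a one-line reduction to the previous lemma.
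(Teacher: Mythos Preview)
Your proof is correct and follows the same overall strategy as the paper: write $n = p^{k} m$ with $p \nmid m$ and reduce to Lemma~\ref{order_exercise}. The execution differs in one detail. The paper uses the exact identity $\Phi_n(x) = \Phi_m(x^{p^{k}})/\Phi_m(x^{p^{k-1}})$ to deduce $\Phi_m(a^{p^{k}}) \in \p$, applies Lemma~\ref{order_exercise} to the element $a^{p^{k}}$ to obtain $e_{\p}(a^{p^{k}}) = m$, and then invokes $\gcd(p^{k}, \Nm(\p)-1)=1$ to transfer this to $e_{\p}(a)=m$. Your route via the Frobenius congruence $\Phi_n(x) \equiv \Phi_m(x)^{\phi(p^{k})} \pmod{p}$ lands directly on $\Phi_m(a) \in \p$, so Lemma~\ref{order_exercise} applies to $a$ itself and the coprimality step is unnecessary. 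The two arguments are of comparable length; yours is marginally more direct, while the paper's avoids having to verify the mod-$p$ cyclotomic identity.
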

\begin{proof} Lemma \ref{order_exercise} is the case where $v_p(n) =0$, so let $n = p^r m$, with $r \geq 1$ and $p \nmid m$.
Then
\[\Phi_n(x) = \Phi_{pm}(x^{p^{r-1}}) = \frac{\Phi_m(x^{p^r})}{\Phi_m(x^{p^{r-1}})}, \]
and $\Phi_n(a) \in \p$ implies $\Phi_m(a^{\p^r}) \in \p$.
By assumption, $a^{p^r}$ is neither $0$ nor a root of unity, so $e_{\p}(a^{p^r}) = m$ by Lemma \ref{order_exercise}.
The rational integers $p^r$ and $\Nm(\p) -1$ are relatively prime, 
so $e_{\p}(a^{p^r}) = e_{\p}(a) = m = n \cdot p^{-v_p(n)}$.
This means $n \cdot p^{-v_p(n)}$ divides $\Nm(\p) -1$ and  $\Nm(\p) \equiv 1 \pmod{n \cdot p^{-v_p(n)}}$.
\end{proof}

Given a positive rational integer $k$, we define $c(k) = \prod_{p|k} (1 - \frac{\gcd(k, p)}{p^2}) >0$.
In the proof Lemma 2.6 in \cite{Ding}, Ding showed the following.
\begin{lemma}\label{Ding} (\cite{Ding}, p 485)
Given a positive integer $k$, 
\[ \left | \left \{ n \leq x: \phi(nk) > \frac{2 c(k)}{3} nk \right \} \right | \gg_k x.\]
\end{lemma}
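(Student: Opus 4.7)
The plan is to unpack the formula for $c(k)$, reduce the inequality to a cleaner condition on $\phi(n)/n$, and then invoke the classical Erd\H{o}s--Wintner theorem on the distribution of $\phi(n)/n$.

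First I would observe that $c(k) = \phi(k)/k$: whenever $p \mid k$, $\gcd(k,p)=p$, so each factor in the defining product equals $1-1/p$. Using multiplicativity of $\phi$ and splitting the primes dividing $nk$ according to whether they divide $k$, one gets
\[
\frac{\phi(nk)}{nk} \;=\; c(k)\!\prod_{\substack{p\mid n\\ p\nmid k}}\!\left(1-\frac{1}{p}\right),
\]
so the inequality $\phi(nk) > \frac{2c(k)}{3}nk$ is equivalent to $\prod_{p\mid n,\,p\nmid k}(1-1/p) > 2/3$. Restricting attention to $n$ with $\gcd(n,k)=1$, this becomes simply $\phi(n)/n > 2/3$, so it suffices to produce $\gg_k x$ such integers $n \leq x$.

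Next I would invoke the Erd\H{o}s--Wintner theorem. The function $\log(\phi(n)/n) = \sum_{p\mid n}\log(1-1/p)$ is strongly additive, and both $\sum_p |\log(1-1/p)|/p$ and $\sum_p (\log(1-1/p))^2/p$ are dominated by $O(\sum_p 1/p^2)<\infty$, so $\phi(n)/n$ admits a continuous limiting distribution function $F$ on $[0,1]$. To confirm $F(2/3)<1$: for a random $n$ coprime to $6$, the expected ``cost'' $\mathbb{E}\bigl[\sum_{p\mid n,\,p\geq 5}\log(p/(p-1))\bigr] = \sum_{p\geq 5}\log(p/(p-1))/p$ lies well below the threshold $\log(3/2) \approx 0.405$, so a positive density of such $n$ fall below threshold, i.e.\ satisfy $\phi(n)/n > 2/3$. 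Repeating the argument on each residue class coprime to $\mathrm{rad}(k)$ yields a positive density of $n$ coprime to $k$ with $\phi(n)/n > 2/3$, hence $\gg_k x$ integers.

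The main obstacle will be the strict positivity $F(2/3) < 1$: a first-moment calculation alone is not enough, since the distribution could in principle have been degenerate near its mean. The cleanest rigorous route is the full Erd\H{o}s--Wintner framework; alternatively, one could carry out a Tur\'an--Kubilius second-moment estimate pinning $\log(\phi(n)/n)$ near its (sub-$\log(3/2)$) mean on a positive-density set of $n$ coprime to $6$, which would then restrict to the progression coprime to $k$ to finish.
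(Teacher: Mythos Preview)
The paper does not give its own proof of this lemma: it simply records the statement as something extracted from the proof of Ding's Lemma~2.6 and cites \cite{Ding}. So there is no in-paper argument to compare against, and your proposal stands or falls on its own.

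Your reduction is correct. Since $p\mid k$ forces $\gcd(k,p)=p$, indeed $c(k)=\prod_{p\mid k}(1-1/p)=\phi(k)/k$, and then $\phi(nk)/(nk)=c(k)\prod_{p\mid n,\,p\nmid k}(1-1/p)$, so the lemma's inequality is exactly $\prod_{p\mid n,\,p\nmid k}(1-1/p)>2/3$; restricting to $(n,k)=1$ and asking for $\phi(n)/n>2/3$ is a valid sufficient condition.

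Your stated ``main obstacle'' is not one. The very first-moment computation you sketch is already rigorous, because the cost $-\log(\phi(n)/n)$ is nonnegative: Markov's inequality turns $\mathbb{E}[-\log(\phi(n)/n)]<\log(3/2)$ directly into a positive lower density for $\{\phi(n)/n>2/3\}$. Equivalently and even more simply, since $\phi(n)/n\le 1$, the average of $\phi(n)/n$ over $n\le x$ with $(n,6k)=1$ tends to $\prod_{p\nmid 6k}(1-1/p^{2})\ge \prod_{p\ge 5}(1-1/p^{2})=9/\pi^{2}\approx 0.912$, and a $[0,1]$-valued quantity with mean $\mu>2/3$ must exceed $2/3$ on at least a $3(\mu-2/3)$ fraction of the set. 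Coupled with the positive density $\prod_{p\mid 6k}(1-1/p)$ of integers coprime to $6k$, this already yields $\gg_k x$ admissible $n$. The Erd\H{o}s--Wintner/Schoenberg and Tur\'an--Kubilius machinery you propose would work, but is heavier than needed.
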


The previous three lemmas allow us to generalize Ding's growth result as Theorem \ref{strong}, 
but only for algebraic integers $a$ such that $|\sigma(a)| \geq 2$ for all $\sigma \in \Gal(K/\Q)$.
Theorem \ref{iqf} states that is not a problem for imaginary quadratic number fields.
  
\begin{proof} (of Theorem \ref{strong}) Lemma \ref{Ding} shows 
\begin{align*}
\left | \left \{ n \leq x: \phi(nk) > \frac{2 c(k)}{3} nk \right \} \right | & \gg_k x.\\
\intertext{If $\phi(nk) > \frac{2 c(k)}{3} nk$ and $\epsilon = c(k)/3$, then Lemma \ref{prime bound} implies}
\Nm(C_{nk,a}') & \gg_{K, a, k} |\Nm(a)|^{\frac{c(k) n k}{3}}.\\
\intertext{That means that if $n$ is large enough and $\phi(nk) > \frac{2c(k)nk}{3}$, 
$\Nm(C_{nk,a}') >1$, so }
\left | \left \{ n \leq x : \Nm(C_{nk, a}') > 1 \right \} \right | &\gg_{K, a, k}  x.
 \stepcounter{equation}\tag{\theequation}\label{carrot}
\end{align*}

 Lemma \ref{Chen_lemma} shows that if $\Nm(C_{nk,a}') >1$,  there exists some prime $\p$ such that 
$\p|C_{nk,a}'$ but $\p \nmid C_{m,a}'$ for $m < nk$.
Lemma \ref{heart} shows that each of these primes$\p$ is a non-Wieferich place base $a$.
Suppose $\p$ lies over the rational prime $p$.
Lemma \ref{carrot} shows that if $\p$ is unramified and $\gcd(p,k)=1$ then $\Nm(\p) \equiv 1 \pmod{k}$.
There are only finitely many ramified primes and only finitely many primes $p$ divide $k$, so this is not an onerous condition.
We use Lemma \ref{upper_norm_bound} again to see that if $\p | C_{nk, a}'$, then
\[\Nm(\p)  \leq \Nm(C_{nk,a}') \leq |\Nm(a^{nk}-1)| \leq 2^{[K:\Q]} |\Nm(a)|^{nk},\]
so applying our analysis to equation \ref{carrot} shows 
\begin{align*}
\left | \left \{
\begin{array}{l}
\text{non-Wieferich places }\\
 \p \text{ base }a
 \end{array} : 
 \begin{array}{l}
 \Nm(\p) \leq 2^{[K:\Q]} |\Nm(a)|^{yk} \\
 \Nm(\p) \equiv 1 \pmod{k} 
 \end{array} 
\right \} \right | & \gg_{K, a, k} y,\\
\intertext{and thus setting $x = 2^{[K:\Q]} |\Nm(a)|^{yk}$ yields}
\left | \left \{
\begin{array}{l}
\text{non-Wieferich places }\\
 \p \text{ base }a
 \end{array} : 
 \begin{array}{l}
 \Nm(\p) \leq x \\
 \Nm(\p) \equiv 1 \pmod{k} 
 \end{array} 
\right \} \right | & \gg_{K, a, k} \log x.\\
\end{align*} 
\end{proof}

\begin{proof} (of Theorem \ref{iqf}) Theorem \ref{strong} shows our claim holds if $|\sigma(a)| \geq 2$ for all $\sigma \in \Gal(K/\Q)$.  
Suppose $K = \Q(\sqrt{-d})$.

If %$K = \Q(\sqrt{-d})$, 
$d \not \equiv 3 \pmod{4}$ and $a = x + y \sqrt{-d}$,  
then our condition holds unless $|x + y \sqrt{-d}| <2$.  
That happens if and only if $x^2 + d y^2 <4$, which would force $a \in \{0, \pm 1, \pm i, \pm 1 \pm i, \pm \sqrt{-2}, \pm 1 \pm \sqrt{-2}\}$.

If %$K = \Q(\sqrt{-d})$, 
$d \equiv 3 \pmod{4}$ and $a = x + y \frac{1 + \sqrt{-d}}{2}$,  
then our condition holds unless $|x + y \frac{1 + \sqrt{-d}}{2}| <2$. 
This happens if and only if $x^2 + xy + \frac{(d +1)y^2}{4} <4$,
which only occurs if $x \in \left \{ 0, \pm 1, \frac{\pm 1 \pm \sqrt{-3}}{2},  \frac{\pm 3 \pm \sqrt{-3}}{2},
\frac{\pm 1 \pm \sqrt{-7}}{2},\frac{\pm 1 \pm \sqrt{-11}}{2} \right \}$.
\end{proof}

\bmhead{Acknowledgements}  %We would like to thank the referee for his or her excellent suggestions.  
The first author would like to thank Professor Murty for introducing her to Wieferich and 
non-Wieferich primes. We would also like to thank Dubi Kelmer, Greg Minton, and Michael Robinson for their help.  

\section*{Declarations}
The authors received support from IDA's Center for Computing Sciences and the University of Maine.

\end{document}